\newcommand{\eeq}{\end{equation}}
\DeclareMathOperator{\PGL}{PGL}
\newcommand{\bL}{ \bar L}
\newcommand{\lra}{\longrightarrow}
\newcommand{\bA} {\mathbb{A}}
\newcommand{\bP}{\mathbb{P}}
\newcommand{\bZ} {\mathbb{Z}}
\newcommand{\bQ}{\mathbb{Q}}
\newcommand{\cF}{\mathcal{F}}
\newcommand{\CL}{\mathcal{L}}
\newcommand{\cO}{\mathcal{O}}
\newcommand{\cU}{\mathcal{U}}
\newcommand{\vb}{\vec{\beta}}
\DeclareMathOperator{\Prep}{Prep}
\newcommand{\ta}{\tilde \alpha}
\newcommand{\tG}{\tilde G}
\newcommand{\tb}{\tilde \beta}
\newcommand{\tg}{\tilde \gamma}
\DeclareMathOperator{\Spec}{Spec}
\DeclareMathOperator{\Aut}{Aut}
\DeclareMathOperator{\Per}{Per}
\DeclareMathOperator{\Res}{Res}
\newcommand{\Kbar}{ \overline{K}}
\newcommand{\fm}{\mathfrak m}
\newcommand{\fo}{\mathfrak o}
\newcommand{\cX}{\mathcal X}
\newcommand{\cY}{\mathcal Y}
\newtheorem{thm}{Theorem}[section]
\newtheorem{prop}[thm]{Proposition}
\newtheorem{lem}[thm]{Lemma}
\newtheorem{cor}[thm]{Corollary}
\newtheorem{quest}[thm]{Question}
\theoremstyle{definition}
\newtheorem{defn}[thm]{Definition}
\newtheorem{example}[thm]{Example}
\title[Preperiodic points, finiteness, and structures]{Preperiodic
  points, finiteness, and structures of semigroups of algebraic
  automorphisms}
\author{Thomas J. Tucker}
\address{Thomas J. Tucker\\Department of Mathematics\\ University of Rochester\\
  Rochester, NY, 14620, USA}
\email{tjtucker@gmail.com}
\author{Jason P. Bell}
\address{Jason P . Bell \\ University of Waterloo \\ 200 University
  Ave. W., \\ Waterloo, ON, N2L 3G1, CANADA}
\email{ jpbell@uwaterloo.ca}
\keywords{Semigroups, Tits alternative, Burnside problem}
\subjclass{Primary: 20M05; Secondary: 14H37, 20D15}
\begin{document}
\maketitle

\begin{abstract}

  In this paper, we explore a variety of finiteness questions for
  preperiodic points of morphisms.  We begin by treating a group
  action analog of the Burnside problem for torsion groups using the
  $p$-adic arc method.  We then prove some results connecting
  commonality of preperiodic points for elements of an automorphism
  group with structural properties of the group; these results are
  related to well-known results of Tits and Borel.  We finish by
  proving some Northcott-type results for finite morphisms.

\end{abstract}

\section{Introduction}\label{intro}

Beaumont \cite{Beaumont} recently proved the following theorem.

\begin{thm}
Let $f,g: \bP_K^n \lra \bP_K^n$ where $\deg f, \deg g > 1$ and $K$ is
any field.  If $\Prep(f) \not= \Prep(g)$, then the semigroup generated
by $f$ and $g$ under composition is the free semigroup on two generators.
\end{thm}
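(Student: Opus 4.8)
The plan is to prove the contrapositive: if $S=\langle f,g\rangle$ is \emph{not} free on $f$ and $g$, then $\Prep(f)=\Prep(g)$. Since preperiodicity of a point is preserved under field extension and involves only finitely many coordinates, we may assume $K$ is finitely generated over its prime field; if $\operatorname{char}K=p$ and $K=\overline{\mathbb F}_p$ every orbit is finite and there is nothing to prove, so we may fix a height $h$ on $\bP^n(\overline{K})$ with the Northcott property on sets of bounded degree (a Moriwaki height in characteristic $0$; away from the isotrivial locus in characteristic $p$, the isotrivial case being treated by hand). Then for an endomorphism $\phi$ of $\bP^n$ of degree $\ge 2$ one has $\Prep(\phi)=\{\hat h_\phi=0\}=\Prep(\phi^k)$, and if $\psi$ commutes with $\phi$ then $\hat h_\phi\circ\psi=(\deg\psi)\,\hat h_\phi$ while $\hat h_\phi-h$ is bounded, so by uniqueness of the canonical height $\hat h_\phi=\hat h_\psi$; in particular
\[
\phi\circ\psi=\psi\circ\phi\ \Longrightarrow\ \Prep(\phi)=\Prep(\psi).
\]
We also use throughout that a nonconstant endomorphism of $\bP^n$ is finite and surjective: thus $\alpha\circ\gamma=\beta\circ\gamma$ forces $\alpha=\beta$, the preimage of a finite set under $\gamma$ is finite, and $\Prep(\alpha\circ\beta)=\beta^{-1}\bigl(\Prep(\beta\circ\alpha)\bigr)$ (because $\beta$ carries the $(\alpha\circ\beta)$-orbit of $P$ onto the $(\beta\circ\alpha)$-orbit of $\beta(P)$), while degrees are multiplicative under composition.

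Non-freeness gives distinct words $\omega_1\ne\omega_2$ in $f,g$ representing the same endomorphism; pick such a pair of minimal length. Right-cancellation shows a minimal relation has no common suffix, so the two words end in different letters; write $\omega_1=U_1\circ f$, $\omega_2=U_2\circ g$ with $U_1,U_2$ words (possibly empty), equal as maps. The engine is the following \emph{absorption lemma}: if $c\circ\alpha=c\circ\beta$ and $\alpha$ factors as $\alpha=\gamma\circ c$, then $\Prep(\alpha)\subseteq\Prep(\beta)$. Indeed, given $P\in\Prep(\alpha)$ with finite orbit $A=O_\alpha(P)$ (so $\alpha(A)\subseteq A$), one shows $c(\beta^kP)\in c(A)$ for all $k\ge 1$ by induction: the relation gives $c(\beta^{k+1}P)=c\bigl(\alpha(\beta^kP)\bigr)$, and $\alpha(\beta^kP)=\gamma\bigl(c(\beta^kP)\bigr)\in\gamma(c(A))=\alpha(A)\subseteq A$ by the inductive hypothesis; hence $\beta^kP\in c^{-1}(c(A))$, a fixed finite set, so $P\in\Prep(\beta)$. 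A complementary device, using an identity of the shape $c^2=c\circ\alpha$ — for instance rewriting $g\circ f=g\circ g$ as $g^m=g\circ f^{m-1}$, which puts the whole $g$-orbit of $P$ inside $g\bigl(O_f(P)\bigr)$ — handles the relations to which the absorption lemma only applies in one direction.

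It then remains to show every minimal relation feeds into this machinery. When $\deg f$ and $\deg g$ are multiplicatively independent the two words of the relation have equal content and length, and a combinatorial analysis (using that a minimal relation admits no cancellations) reduces it to a commutation $\phi\circ\psi=\psi\circ\phi$ for suitable words $\phi,\psi$ in $f,g$; the displayed implication plus $\Prep(\phi^k)=\Prep(\phi)$ then give $\Prep(f)=\Prep(g)$. In general one peels common leading letters off $U_1\circ f$ and $U_2\circ g$ by the absorption lemma, alternates with the conjugation identity $\Prep(\alpha\circ\beta)=\beta^{-1}\Prep(\beta\circ\alpha)$, and descends to a commutation or to a $g\circ f=g\circ g$-type identity for suitable words; symmetrising in $f$ and $g$ yields the claim.

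\emph{The main obstacle} is exactly this last reduction for an \emph{irreducible} minimal relation — one in which the two words share neither a leading nor a trailing letter and which is not visibly a commutation, e.g. $f^{2}\circ g^{2}=(g\circ f)^{2}$: here the absorption lemma applies in at most one direction, and a naive orbit-tracking argument produces a tower of finite sets whose cardinalities grow, so the loop does not close. Controlling this should require combining the severe constraints that minimality and multiplicativity of degrees jointly impose on such a relation with the Northcott property, so as to conclude that for $P\in\Prep(f)$ the orbit $O_g(P)\subseteq\bP^n(K(P))$ has bounded height and is therefore finite. The secondary technical point is the positive-characteristic isotrivial case, in which $\Prep(\phi)$ is strictly smaller than $\{\hat h_\phi=0\}$ and must be analysed directly.
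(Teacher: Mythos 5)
Your proposal does not follow the paper's (i.e.\ Beaumont's, following BHPT's) approach, and as you yourself acknowledge, it does not close. The paper explicitly describes the proof strategy: $f$ and $g$ induce \emph{contraction maps} $T_f\colon \phi\mapsto (\deg f)^{-1}\,\phi\circ f$ and $T_g$ on a suitable complete metric space of (normalized) height functions; the unique fixed points are $\hat h_f$ and $\hat h_g$; and one then shows by a ping-pong type argument that two contractions whose multipliers satisfy a natural restriction generate a free semigroup on two generators unless they share a fixed point --- i.e.\ unless $\hat h_f=\hat h_g$, which is exactly $\Prep(f)=\Prep(g)$. The virtue of this argument is precisely that it never needs to analyze individual semigroup relations: freeness falls out of the metric geometry of the fixed points.

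Your approach is instead a word-combinatorial induction on a putative minimal relation $\omega_1=\omega_2$, using the absorption lemma (which is correct as stated --- the finiteness of $c^{-1}(c(A))$ and the computation $\gamma(c(A))=\alpha(A)\subseteq A$ both check out) together with the conjugation identity $\Prep(\alpha\circ\beta)=\beta^{-1}\Prep(\beta\circ\alpha)$. The gap you flag is genuine and is fatal to this route: for an irreducible minimal relation with no shared prefix or suffix and no visible commutator structure (your $f^2\circ g^2=(g\circ f)^2$ example), the absorption lemma only applies in one direction, each application of $\Prep(\alpha\circ\beta)=\beta^{-1}\Prep(\beta\circ\alpha)$ enlarges the finite set you are tracking by a factor of $\deg\beta$, and there is no mechanism in the proposal that brings the tower back down. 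Your suggested fix --- invoking Northcott to bound the height of $O_g(P)$ for $P\in\Prep(f)$ --- presupposes precisely the conclusion $\hat h_g(P)=0$ you are trying to establish, so it is circular as phrased. Two smaller issues: the claim that multiplicative independence of $\deg f,\deg g$ reduces a minimal relation to a commutation is not justified (equal $f$-count and $g$-count does not force commutator form), and the positive-characteristic isotrivial case is flagged but left entirely open, whereas Beaumont's statement applies over any field. In short, the idea of attacking the contrapositive via relation surgery is plausible but incomplete in exactly the place where the contraction-mapping argument is designed to succeed without case analysis; the proposal as written is not a proof.
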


This built on earlier work of \cite{BHPT}, who showed that under the
same hypotheses, there is an $m$ such that the semigroup generated by
$f^m$ and $g^m$ under composition is the free semigroup on two
generators.  The idea of Beaumont's proof, which follows that of
\cite{BHPT}, is to use the fact that $f$ and $g$ induce contraction
maps on a suitable space of height functions and then show that any two
contraction maps satisfying a natural restriction on their multipliers
will generate a free semigroup on two generators unless they have a
common fixed point (which happens for the contraction maps induced by
$f$ and $g$ exactly when $\Prep(f) = \Prep(g))$.  We note that
Beaumont's theorem actually holds for any two morphisms of a projective
variety $V$ polarized by the same line bundle (that is, for any maps $f,g:
V \lra V$ such that there is an ample line bundle $\CL$ on $V$ with the
property that $f^* \CL \cong \CL^{d_1}$ and $g^*\CL \cong \CL^{d_2}$ for some
$d_1, d_2 > 1$).

The conclusion of Beaumont's theorem does not hold as stated if one
of the maps is an automorphism.  Consider the semigroup $S$
generated by $f(x) = 2x$ and $g(x) = x^2$.  We see that $\Prep(g)$ is
much larger than $\Prep(f)$, but we have
$f \circ f \circ g = g \circ f$, so $S$ is not free on two generators.
On the other hand, $S$ does contain $2x^2$ and $x^2$, which do generate a
free semigroup on two generators (this follows easily from the
uniqueness of binary expansions of natural numbers).

Additional complications occur for morphisms having periodic points
that are not isolated, as we shall see in Examples \ref{nil-ex} and
\ref{solv-ex}.  To explain this, we need a little terminology.

\begin{defn}
For a given integer $n$ and an automorphism $g: X \lra X$ of a variety
$X$, we define

\[  \Per_n(g) = \{ x \in X(\Kbar) \; | \; \text{$g^n(x) = x$}
    \}. \]

  Note that $\Per_n(g)$ is a Zariski closed subset of $X$.  We define
  $\Per(g)$ to be the union $\cup_{n=1}^\infty \Per_n(g)$.  

  We let $\Per_n^*(g)$ be the points $x$ that are in
  0-dimensional components of $\Per_n(g)$, where $n$ is the smallest
  positive integer for $g^n(x) = x$. We define $\Per^*(g)$ to be the
  union $\cup_{n=1}^\infty \Per^*_n(g)$. 

We define $\Per^{**}(g)$ to be the set of all $x \in X(\Kbar)$ such
that $x \in \Per(g)$ and $x$ is not in a positive-dimensional
component of $\Per(g^n)$ for any $n$.  Note that this may be smaller
than $\Per^*(g)$ since a point could be in a zero-dimensional
component of $\Per_n(g)$ but in a positive dimensional component of
$\Per_{mn}(g)$ for some $m > 1$ (for example, $g$ could itself be torsion).  

\end{defn}

Many questions related to the problems in this paper, such as the Tits
alternative \cite{Tits} and its variants (see \cite{Tits, BHPT,
  Tits-Var, Tits-Var-Z, Hu}) are stated in terms of finitely generated
groups or finitely generated semigroups, so we now pose a general
question in that language.

\begin{quest}\label{TitsQ1}
  Let $G$ be a finitely generated group of automorphisms of a
  quasiprojective variety $X$.  It is true that one of the following
  must hold:
\begin{itemize}
\item[(i)] we have $\Per^{**}(g_1) \subseteq \Per(g_2)$ for any $g_1,
  g_2 \in G$ or
\item[(ii)] $G$ contains a subsemigroup that is isomorphic to the free
  semigroup on two generators?
\end{itemize}
\end{quest}

We are able to answer Question \ref{TitsQ1} in the case of projective
varieties.

\begin{thm}\label{auto-proj}
  Let $G$ be a finitely generated group of automorphisms of a
  projective variety $X$.  Then at least one of the following must hold:
\begin{itemize}
\item[(i)] we have $\Per^{**}(g_1) \subseteq \Per(g_2)$ for any $g_1,
  g_2 \in G$; or
\item[(ii)] $G$ contains a subsemigroup that is isomorphic to the free
  semigroup on two generators.
\end{itemize}
\end{thm}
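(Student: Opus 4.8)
The plan is to split the statement into a structural dichotomy for $G$ and an analysis of the constrained case. I would first establish a ``semigroup Tits alternative'': a finitely generated group $G$ of automorphisms of a projective variety is either virtually nilpotent, or contains a subsemigroup isomorphic to the free semigroup on two generators. This gives alternative (ii) in the second case (a free subsemigroup of $G$ is all that is needed), so the theorem is reduced to the implication: a \emph{virtually nilpotent} group of automorphisms of a projective variety satisfies (i). Note that for the ``(ii)'' half one may freely pass to finite-index subgroups and replace generators by powers, and that a free subsemigroup of a quotient lifts to one of $G$ (a nontrivial semigroup relation among lifts would project to one downstairs); projectivity will be used through the action on a N\'eron--Severi-type lattice and through the structure of $\Aut(X)$.

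For the dichotomy, let $\rho\colon G\lra\GL(N^1(X)_{\bR})$ be the action on the real N\'eron--Severi space, with image $\Gamma=\rho(G)$, a finitely generated linear group. If $\Gamma$ is not virtually nilpotent then, by the Tits alternative \cite{Tits}, $\Gamma$ either contains a nonabelian free subgroup or is virtually solvable; in the virtually solvable case a Milnor--Wolf / Rosenblatt-type result (a finitely generated solvable group that is not virtually nilpotent contains a free subsemigroup on two generators) supplies a free subsemigroup of $\Gamma$. Either way one gets a free subsemigroup of $\Gamma$, hence of $G$. So assume $\Gamma$ is virtually nilpotent; then the ``mass'' of $G$ is carried by the subgroup of automorphisms acting trivially on $N^1(X)$, which by Lieberman--Fujiki-type finiteness results is, up to finite index, a subgroup of the connected algebraic group $\Aut^0(X)$, an extension of an abelian variety by a linear algebraic group (Chevalley). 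Rerunning the Tits/Rosenblatt dichotomy inside this algebraic group --- the abelian-variety quotient is commutative, the linear part is handled as for $\Gamma$ --- yields either a free subsemigroup, finishing (ii), or enough control to conclude that $G$ itself is virtually nilpotent. Making the last step airtight (finite generation of the relevant kernels, and disposing of the intermediate ``metabelian but not virtually nilpotent'' groups, which always do produce a free subsemigroup) is the delicate bookkeeping here.

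It remains to prove: if $G$ is virtually nilpotent, $g_1,g_2\in G$, and $x\in\Per^{**}(g_1)$, then $x\in\Per(g_2)$. Passing to a finite-index nilpotent normal subgroup and replacing $g_1,g_2$ by suitable powers --- both harmless for the conclusion $x\in\Per(g_2)$ --- we may assume $G$ is nilpotent, $g_1(x)=x$, and $x$ lies in a $0$-dimensional component of $\Per_m(g_1)$ for every $m\geq1$. It suffices to show the orbit $G\cdot x$ is finite, since then $g_2$ permutes this finite set and a power of $g_2$ fixes $x$. For any $g$ in the centralizer $C_G(g_1)$ we have $g\,\Per_m(g_1)=\Per_m(g_1)$, so $g$ permutes the finitely many (because $X$ is Noetherian) $0$-dimensional components of $\Per_1(g_1)$, and $g\cdot x$ is one of them --- using $g_1g=gg_1$ and $x\in\Per^{**}(g_1)$ to see that $g\cdot x$ again lies in a $0$-dimensional component of $\Per_1(g_1)$. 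Hence $C_G(g_1)\cdot x$ is finite. The substantive work is to propagate this from $C_G(g_1)$ to all of $G$: one uses that the rigidity of $x$ --- its being isolated in $\Per_m(g_1)$ for \emph{all} $m$ --- forces the linearized action of $g_1$ on the cotangent space of $\widehat{\cO}_{X,x}$ to have semisimple part with no ratio of distinct eigenvalues a root of unity and with one-dimensional relevant eigenspace, that in a nilpotent group the semisimple parts are ``almost central'', and the structure theory of connected nilpotent algebraic groups (Chevalley) inside $\Aut^0(X)$, to conclude that the rest of $G$ can only shuffle $x$ among finitely many points with the same local data. In the contrapositive form: if some $x\in\Per^{**}(g_1)$ had infinite $\langle g_2\rangle$-orbit, the hyperbolic-type local dynamics of $g_1$ near $x$ (extracted from the rigidity after base change to a suitable complete field) together with the way $g_2$ pushes $x$ off its fixed locus would let one run a ping-pong argument producing a free subsemigroup, contradicting virtual nilpotence.

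I expect the crux to be exactly this propagation step: converting the scheme-theoretic rigidity defining $\Per^{**}$ into genuine algebraic control of the local behaviour of $g_1$ at $x$ (for possibly singular $X$), and then leveraging nilpotence --- together with the Lieberman--Fujiki/Chevalley description of $\Aut(X)$ --- to bound the full orbit $G\cdot x$ on the non-linear variety $X$. A secondary obstacle is having the ``not virtually nilpotent $\Rightarrow$ free subsemigroup'' principle available precisely for subgroups and subquotients of $\Aut(X)$, rather than only for finitely generated linear or solvable groups.
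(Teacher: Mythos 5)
Your first half---the dichotomy ``virtually nilpotent or free subsemigroup''---is in spirit what the paper does, except that the paper simply cites the existing Tits-type theorems for $\Aut(X)$ of a projective variety (Zhang for characteristic $0$, Hu for characteristic $p$, Keum--Oguiso--Zhang) to conclude $G$ is virtually solvable or contains a free group, and then invokes Rosenblatt's theorem that a virtually solvable, non-virtually-nilpotent group contains a free subsemigroup. Your attempt to re-derive this via $N^1(X)_{\bR}$, Lieberman--Fujiki, and Chevalley is not needed and, as you say yourself, is left with ``delicate bookkeeping''; it is cleaner (and in positive characteristic, where Lieberman--Fujiki is not available in the form you want, necessary) to just cite the black-box results.

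The genuine gap is in the second half. Your plan---reduce to $G$ nilpotent with $g_1(x)=x$, observe that $C_G(g_1)\cdot x$ is finite because $C_G(g_1)$ permutes the finitely many zero-dimensional components of $\Per_1(g_1)$, and then ``propagate'' finiteness of the orbit to all of $G$ via semisimplicity of the local linearization, hyperbolic local dynamics, and ping-pong---never closes. Showing $x\in\Per(g_2)$ for a general $g_2$ (not commuting with $g_1$) is exactly the content of the theorem, so the centralizer step makes no real progress, and the propagation argument you sketch is not an argument. Moreover, you aim for the \emph{stronger} conclusion that $G\cdot x$ is finite; the paper does not claim this (it proves $x\in\Per(g_2)$ only), and the orbit-finiteness statement it does prove elsewhere (Theorem~\ref{arith-burn}) requires characteristic~$0$, whereas Theorem~\ref{auto-proj} is characteristic-free. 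The paper's actual mechanism in the nilpotent case is quite different and elegantly sidesteps all of this: since a finitely generated infinite nilpotent group has an infinite-order central element $h$, Lemma~\ref{commute-same} gives $\Per^*(g_1)\subseteq\Per(h)$, and for each $n$ the proper closed subset $Y=\Per_n(h)$ is stabilized by both $g_1$ and $g_2$. One then does Noetherian induction: the zero-dimensional components of $Y$ are handled directly, and for a positive-dimensional component $Y'$ the restricted group $H_{Y'}$ is again finitely generated nilpotent, so the inductive hypothesis applies. That induction on the variety (rather than any attempt to control the full orbit) is the idea you are missing.
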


We prove Theorem \ref{auto-proj} by combining Tits type results for
automorphism groups of projective varieties \cite{Tits-Var,
    Tits-Var-Z, Hu} and a result of Rosenblatt's on solvable groups
  \cite{Rosenblatt} with the following theorem proved in Section \ref{nil-sec}.

  \begin{thm}\label{nil-same}
  Let $G$ be a finitely generated group of automorphisms of a variety
  $X$.  If $G$ is nilpotent, then for any
  $g_1, g_2 \in G$, we have $\Per^{*}(g_1) \subseteq \Per(g_2)$.  In
  particular, if $\Per_n(g_1)$  and $\Per_n(g_2)$ are finite for all
  $n$, then $\Per(g_1) = \Per(g_2)$.  
 \end{thm}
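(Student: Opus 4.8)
The plan is to strip off the hypothesis on $g_1$, recast the statement as one about isolated fixed points, and then induct on the nilpotency class. By definition of $\Per^{*}$, for $x\in\Per^{*}(g_1)$ there is an $n$ with $g_1^{n}(x)=x$ such that $x$ is an isolated point of $\Per_n(g_1)=\operatorname{Fix}(g_1^{n})$; set $a:=g_1^{n}$. It then suffices to show $x\in\Per(g_2)$, i.e.\ that the $g_2$‑orbit of $x$ is finite. So I would prove the self‑strengthening statement $(\star)$: \emph{if $H$ is a finitely generated nilpotent group acting by automorphisms on a variety $X$, $a\in H$, and $x\in X$ is an isolated point of $\operatorname{Fix}(a)$, then $\langle h\rangle x$ is finite for every $h\in H$}. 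Taking $H=\langle g_1^{n},g_2\rangle$ and $h=g_2$ yields $\Per^{*}(g_1)\subseteq\Per(g_2)$; the ``in particular'' clause is then immediate, since if all $\Per_n(g_i)$ are finite then $\Per^{*}(g_i)=\Per(g_i)$, and the inclusion becomes symmetric.

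To prove $(\star)$ I would fix $h$, replace $H$ by $\langle a,h\rangle$ (still finitely generated and nilpotent), and induct on the class $c$. The recurring mechanism is: an automorphism $\varphi$ of $X$ commuting with $a$ preserves $\operatorname{Fix}(a)$ and permutes its finitely many irreducible components while preserving dimension, so it carries the $0$‑dimensional component $\{x\}$ to a $0$‑dimensional component $\{\varphi(x)\}$; hence $\varphi(x)$ lies in the finite set $F$ of points that are $0$‑dimensional components of $\operatorname{Fix}(a)$. For $c=1$ (abelian case) every $h^{k}$ commutes with $a$, so $\langle h\rangle x\subseteq F$ is finite. The same observation gives, for general $c$, that $C_H(a)\cdot x\subseteq F$, so $C_H(a)\cap\operatorname{Stab}_H(x)$ has finite index in $C_H(a)$.

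For the inductive step $c\ge 2$, let $Z$ be the last nontrivial term of the lower central series of $H$, a finitely generated abelian central (hence normal) subgroup. Since $Z\subseteq Z(H)\subseteq C_H(a)$, we have $Z\cdot x\subseteq F$, so $Z/(Z\cap\operatorname{Stab}_H(x))$ is finite; let $m$ be its exponent, so $z^{m}\in\operatorname{Stab}_H(x)$ for all $z\in Z$. Now pass to $H':=\langle a,h^{m}\rangle\le H$. Because $H$ has class $c$, a left‑normed $c$‑fold commutator is additive in each entry (the error terms being $(c{+}1)$‑fold commutators, which vanish); so each $c$‑fold commutator of the generators $a^{\pm1},h^{\pm m}$ is either trivial (if two of its leading entries are powers of $a$) or equals a power of $m$ of an element of $Z$, hence in all cases lies in $\{z^{m}:z\in Z\}\subseteq\operatorname{Stab}_H(x)$. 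Therefore the top lower‑central term $H'_{(c)}$ fixes $x$, and $H'$ acts on the closed subscheme $X':=\operatorname{Fix}(H'_{(c)})\ni x$ through the quotient $\bar H:=H'/H'_{(c)}$, which is finitely generated nilpotent of class $\le c-1$. Since $x$ is isolated in $\operatorname{Fix}(a)$ it is isolated in $\operatorname{Fix}(a)\cap X'$, the fixed locus on $X'$ of the image of $a$; so the induction hypothesis applied to $\bar H$ acting on $X'$ gives that $\langle h^{m}\rangle x$ is finite, whence $\langle h\rangle x$ is finite.

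The step I expect to be the crux is exactly this reduction of the class. The naive idea — quotient $H$ by its top lower‑central term $Z$ and push the action down — fails because the component‑permutation argument only produces a \emph{power} of each element of $Z$ fixing $x$, not $Z$ itself, and one cannot pass to a quotient of $X$ without leaving the category of varieties. The device $H'=\langle a,h^{m}\rangle$ repairs this: it keeps $a$, so the isolated‑fixed‑point hypothesis survives unchanged, and its $c$‑fold commutators automatically absorb the extra factor $m$, so that the subgroup one quotients by genuinely fixes $x$. Everything else — that subgroups of finitely generated nilpotent groups are finitely generated, that automorphisms of Noetherian schemes permute the finitely many components, and the additivity of top‑class commutators — is routine bookkeeping.
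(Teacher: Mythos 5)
Your proposal is correct, but it proceeds by a genuinely different route from the paper's. The paper's proof is a \emph{Noetherian induction on the variety} $X$: when $G$ is infinite it invokes the standard fact (Lemma~\ref{center}) that a finitely generated infinite nilpotent group has a nontorsion central element $h$, notes via Lemma~\ref{commute-same} that $\Per^*(g_1) \subseteq \Per(h)$, and then descends to the proper closed subvariety $\Per_n(h)$, on which the (restriction of the) group is still finitely generated nilpotent, and applies the inductive hypothesis. Your proof instead \emph{inducts on the nilpotency class}: you isolate the right statement $(\star)$ about an isolated fixed point of a single element $a$, observe that $C_H(a)$ pushes $x$ around inside the finite set $F$ of isolated fixed points of $a$, and then use the multilinearity of top-class commutators to arrange — after replacing $h$ by $h^m$, where $m$ kills the finite image $Z\cdot x$ — that the top lower-central term of $H'=\langle a,h^m\rangle$ genuinely fixes $x$, so that the action drops to a class-$(c-1)$ quotient on the subvariety $\operatorname{Fix}(\gamma_c(H'))$. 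Both arguments pivot on the same elementary mechanism (an automorphism commuting with $a$ permutes the finitely many zero-dimensional components of $\operatorname{Fix}(a)$, and being isolated in a larger fixed locus implies isolated in any closed subscheme of it), but the inductive variables differ: the paper shrinks the variety, you shrink the group. A consequence is that your proof does not need the paper's Lemma~\ref{center} (existence of a nontorsion central element), while instead leaning on standard facts about finitely generated nilpotent groups (subgroups are finitely generated; $\gamma_c/\gamma_{c+1}$ commutator maps are multi-additive). One small imprecision: your parenthetical ``if two of its leading entries are powers of $a$'' is only one of the degenerate cases; the clean statement, which your surrounding argument in fact uses, is that by multilinearity modulo $\gamma_{c+1}(H')=1$ any $c$-fold left-normed commutator in the generators $a^{\pm1},h^{\pm m}$ equals $[t_1,\dots,t_c]^{\pm m^k}$ with $t_i\in\{a,h\}$ and $k$ the number of $h$-entries, and this lies in $\{z^m:z\in Z\}$ whenever it is nontrivial. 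With that reading, the argument is sound.
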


 Theorem \ref{nil-same} is a statement relating commonality of
 periodic points among elements of a nilpotent group; it seems natural
 to ask for a theorem relating commonality of periodic points among
 elements of a solvable group.  In the case of linear algebraic
 groups, the Borel fixed point theorem \cite{Borel1, Borel2} says that
 if $G$ is a connected, solvable, linear algebraic group acting
 regularly on a nonempty, complete algebraic variety $V$ over an
 algebraically closed field $K$, then there is an algebraic point in
 $V(K)$ that is fixed by every element of $G$.  Using techniques
 similar to those of the proof of Theorem \ref{nil-same} along with
 some $p$-adic techniques from \cite{BGT1, BGT2} (that are available
 only in characteristic 0), we prove the following.  Recall that a
 group is said to be virtually solvable (resp. nilpotent) if it
 contains a subgroup of finite index that is solvable
 (resp. nilpotent).  
 
 \begin{thm}\label{solvable}
  Let $G$ be a finitely generated virtually solvable group of automorphisms of a
  variety $X$ defined over a field of characteristic 0.  Suppose that
  for all nontorsion $g \in G$, the set $\Per^{*}(g)$ is nonempty.
  Then there is an $x \in X$ such that $x \in \Per(g)$ for all $g \in
  G$.  Furthermore there is a subgroup $H$ of finite index in $G$ such
  that $h(x) = x$ for all $h \in H$.  
 \end{thm}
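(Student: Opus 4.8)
The plan is to reduce Theorem~\ref{solvable} to the nilpotent case handled in Theorem~\ref{nil-same} by exploiting the structure of finitely generated virtually solvable groups together with the $p$-adic interpolation machinery of \cite{BGT1, BGT2}. First I would pass to a finite-index subgroup: since $G$ is finitely generated and virtually solvable, it contains a finite-index normal subgroup $G_0$ that is solvable, and by a theorem on finitely generated solvable linear-type groups (or more carefully, by working with the derived series) one arranges a subnormal chain with abelian, hence in particular nilpotent, quotients. The key reduction is to a finitely generated \emph{nilpotent} subgroup: by \cite{BGT1, BGT2}, for a finitely generated virtually solvable group acting by automorphisms on a variety in characteristic $0$, one can find a finite-index subgroup $H$ and, for each $h\in H$, a $p$-adic analytic one-parameter family interpolating the iterates $h^n$; the commuting relations among these flows force the "unipotent part'' of $H$ to be nilpotent of bounded class. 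Concretely, $H$ will sit in an extension of a finitely generated abelian group by a nilpotent group, and the $p$-adic arc argument shows the orbit-closure behavior is governed by the nilpotent part.

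Next, I would use the hypothesis that $\Per^*(g)$ is nonempty for every nontorsion $g$. The idea is to build a common periodic point by descending induction on the solvable (or nilpotent) length. At the bottom of the derived series sits an abelian group $A$; applying Theorem~\ref{nil-same} to $A$ (abelian groups are nilpotent) gives $\Per^*(g_1)\subseteq \Per(g_2)$ for all $g_1,g_2\in A$, so choosing any nontorsion $g_1\in A$ (if $A$ is finite the whole group $G$ is finite-by-(finitely generated solvable of smaller length), and one handles torsion separately) one gets a nonempty $G$-relevant periodic set $Z=\Per^*(g_1)$ that is fixed \emph{setwise} by all of $A$. Then one pushes this up the chain: the normalizer action of the next quotient permutes the (finitely many, by $0$-dimensionality) periodic components of the layer below, so a further finite-index subgroup fixes an actual point. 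This is where the "$\Per^*$'' rather than "$\Per^{**}$'' hypothesis is used — we need the relevant periodic locus to be $0$-dimensional so that a finite-index subgroup of the ambient group fixes a point rather than merely preserving a positive-dimensional subvariety.

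The main obstacle I expect is controlling the \emph{nontorsion} hypothesis through the induction: it could happen that every element of the bottom abelian layer $A$ is torsion even though $G$ is infinite (e.g.\ $A$ finite but $G/A$ infinite), in which case Theorem~\ref{nil-same} gives no nonempty periodic set to start from. To handle this I would instead induct on a well-chosen term of the lower central / derived series that is guaranteed to contain a nontorsion element, using that a finitely generated infinite virtually solvable group has a finite-index subgroup surjecting onto $\ZZ$; pulling back a generator of that $\ZZ$ gives a nontorsion central-ish element after passing to a suitable finite-index subgroup, and its $\Per^*$ is the seed. The $p$-adic input from \cite{BGT1, BGT2} is precisely what lets one assume, after passing to finite index, that commuting automorphisms embed in $p$-adic analytic flows so that "periodic for $g$'' becomes "fixed by the associated $p$-adic analytic subgroup,'' converting the combinatorial orbit argument into a statement about a nilpotent $p$-adic analytic group fixing a point — and that in turn is where characteristic $0$ is essential.

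Finally, having produced a point $x$ with $x\in\Per(g)$ for all $g\in G$, the last sentence — existence of a finite-index $H\le G$ with $h(x)=x$ for all $h\in H$ — follows because $G$ acts on the finite set $\{g(x):g\in G\}$ (finite since $x$ is periodic for generators and $G$ is finitely generated, so the orbit lies in the $0$-dimensional locus $\bigcap \Per^*(g_i)$ up to the bounded-order phenomenon), and the kernel of the resulting permutation action is the desired finite-index subgroup. I would be careful here that the orbit of $x$ is genuinely finite: this needs $x$ to lie in $\Per^{**}$-type locus for the generators, which the construction via $\Per^*$ and the descent guarantees after one more pass to finite index.
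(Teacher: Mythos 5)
Your proposal has the right skeleton --- reduce to solvable, induct on derived length, use the $p$-adic arc machinery of \cite{BGT1,BGT2} --- and this is indeed the paper's route. But there are two genuine gaps, and one of them is where all the real work in the paper happens.

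First, the claim that a finitely generated virtually solvable group acting by automorphisms on a variety in characteristic $0$ has a finite-index subgroup that is (essentially) nilpotent is false. Take $G = BS(1,2) = \langle a,b : bab^{-1}=a^2\rangle$ acting on $\bA^1$ by $a: x\mapsto x+1$, $b: x\mapsto 2x$; this is solvable (metabelian), finitely generated, acts faithfully, and is not virtually nilpotent. The ``commuting relations among flows force nilpotence'' assertion has no basis, and if you could reduce to the nilpotent case, Theorem \ref{nil-same} would suffice without any $p$-adic input at all, which cannot be right since Theorem \ref{nil-same} works in all characteristics while Theorem \ref{solvable} does not.

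Second, and more importantly, your inductive step --- ``the normalizer action of the next quotient permutes the (finitely many, by $0$-dimensionality) periodic components of the layer below'' --- is where the paper has to work hardest, and your sketch skips it. If $x$ is periodic for all of $G'=[G,G]$ and $x\in\Per^*(h)$ for some $h\in G'$, it is not at all clear why an arbitrary $g\in G$ should even move $x$ into a point that is again periodic under $G'$, let alone that $g$ permutes a \emph{finite} set. ($\Per^*(h)$ is typically an infinite union of $0$-dimensional components over all periods $n$; fixing $n$ gives a finite set $\Per_n^*(h)$ but $g\notin G'$ need not preserve $\Per_n(h)$ since $g$ need not commute with $h$.) The paper handles this via Lemma \ref{commutator} and Proposition \ref{commutator2}: one first shows using the $p$-adic Burnside result (Proposition \ref{burn1}) that the set $T$ of $\fo_v$-points periodic under all of $G'$ has the property that some finite-index normal subgroup $N\le G'$ fixes $T$ pointwise, hence $G'x$ is finite; Lemma \ref{commutator} then shows $g$ maps $T$ into $T$; and finally a coset-permutation argument produces an $m$ with $g^m h^n N = h^n N g^m$, which makes $g^m$ preserve $\Per_n(h)\cap T$ and hence fix $x$. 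This chain of reasoning --- not a reduction to nilpotent, and not a bare ``permutes finitely many components'' --- is the essential characteristic-$0$ input, and it is absent from your proposal.
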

  
An important ingredient of the proof of Theorem \ref{solvable} is the
following theorem, which may be thought of as a Burnside-type problem
for group actions.  Recall that the Burnside problem for groups asks
if a finitely generated torsion group is finite; there are
counterexamples in general, but with additional requirements (say on
the order of the torsion and the number of generators), it is often
true.  Likewise, given a group action $G$ on a set $T$, one can ask when
it is true that $Gt$ must be finite for any $t \in T$ that is periodic
under every point of $G$.  When $T$ is taken to be $G$ itself under
left multiplication and $t = \{ e \}$, this is the same as the Burnside
problem for groups.  We are able to prove the following Burnside-type
theorem for groups acting on varieties using $p$-adic techniques in
Section \ref{burn-sec}.  

\begin{thm}\label{arith-burn}
  Let $X$ be a quasiprojective variety over a finitely generated field
  $K$ of characteristic 0 and let $G$ be a group of automorphisms of
  $X$ defined over $K$.  Let $W$ be an irreducible subvariety of $X$.
  If $W$ is periodic under $g$ for every element $g \in G$, then the
  orbit of $W$ under $G$ is finite.
\end{thm}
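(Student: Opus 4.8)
The plan is to reparametrize so that the subvariety $W$ becomes a single periodic point and then run the $p$-adic arc method of \cite{BGT1, BGT2}, in the spirit of the proof of Theorem~\ref{nil-same}. First I would pass from $W$ to a point: the members of the orbit $GW$ all have the same dimension, and \emph{granting that they form a family of finite type}, they are parametrized by the $\Kbar$-points of a quasiprojective $K$-scheme $Y$ (a suitable component of a Chow variety, or of a Hilbert scheme once a projective completion of $X$ is chosen), on which $G$ acts by $K$-automorphisms; let $w \in Y(\Kbar)$ be the point corresponding to $W$. Then ``$W$ is periodic under $g$'' becomes ``$w$ is a periodic point of $g$'', ``the orbit $GW$ is finite'' becomes ``$Gw$ is finite'', and the stabilizer statement translates correspondingly, so it suffices to prove: if a group $G$ of $K$-automorphisms of a quasiprojective variety $Y$ over a finitely generated field $K$ of characteristic $0$ has a point $w \in Y(\Kbar)$ that is periodic under every element of $G$, then $Gw$ is finite.

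Next I would spread $Y$ and the $G$-action out over a finitely generated $\bZ$-subalgebra $R$ of $K$, choose a maximal ideal $\mathfrak{p}\subset R$ of residue characteristic $p$ avoiding a suitable finite set of bad primes, and fix an embedding $K\hookrightarrow \bC_p$ carrying $R$ into the valuation ring, arranged so that $w$ has good reduction to a smooth point $\bar w$ of the special fiber $Y_{\mathfrak p}$. Let $G_1\le G$ be the subgroup of elements that reduce to the identity modulo $\mathfrak p$ and lie in $\Aut^0(Y)(K)$, where $\Aut^0(Y)$ is the identity component of the automorphism group scheme; the congruence index $[\,\Aut^0(Y)(\mathcal O):\text{(reduction }\mathrm{id}\bmod\mathfrak p)\,]$ is finite because $\Aut^0(Y_{\mathfrak p})(\mathbb F_q)$ is finite, and since $K$ has characteristic $0$ this congruence subgroup is torsion free and each of its elements preserves every residue disk and acts on it by a $p$-adic analytic map. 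By the arc method of \cite{BGT1, BGT2}, for each $\gamma\in G_1$ and each $z\in Y(\bC_p)$ there is a $p$-adic analytic map $\theta_{\gamma,z}\colon \bZ_p\to Y(\bC_p)$ with $\theta_{\gamma,z}(n)=\gamma^n(z)$ for all $n\ge 0$.

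The heart of the argument is then the following. Fix $\gamma\in G_1$ and set $Z:=\overline{G_1w}$, a $K$-subvariety of $Y$ preserved by $G_1$; I claim $\gamma$ acts on $Z$ with finite order. For any $z\in G_1w$, some conjugate of $\gamma$ inside $G_1$ is periodic on $w$, so $\gamma$ is periodic on $z$, whence $\theta_{\gamma,z}$ is periodic; since $\gamma\equiv\mathrm{id}\pmod{\mathfrak p}$, analytic continuation on $\bZ_p$ forces its period — the size of the $\gamma$-orbit of $z$ — to be a power of $p$. Hence $G_1w\subseteq\bigcup_{a\ge 0}\mathrm{Fix}(\gamma^{p^a})$, an ascending union of Zariski-closed subsets of the Noetherian scheme $Z$; it must stabilize, and as $G_1w$ is dense in $Z$ it equals $Z$, so $\gamma^{p^A}=\mathrm{id}$ on $Z$ for some $A$. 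Thus the image of $G_1$ in $\Aut(Z)$ is a torsion group; since $G_1\subseteq\Aut^0(Y)(K)$, the Zariski closure of $G_1$ in $\Aut^0(Y)$ is a connected algebraic group which acts on $Z$ (it stabilizes the point of the parameter scheme corresponding to $Z$) with $G_1$ as a Zariski-dense torsion subgroup of its $K$-points, and over a finitely generated field of characteristic $0$ this forces the induced algebraic group of automorphisms of $Z$ to be finite (a positive-dimensional such group would have a $\mathbb G_a$, $\mathbb G_m$, or abelian-variety subquotient, but $\mathbb G_a$ is torsion free in characteristic $0$ while the torsion in the $K$-points of a torus or abelian variety over a finitely generated field is finite, never dense). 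Therefore $G_1w$ is finite, and since $[G:G_1]<\infty$, so is $Gw$.

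The steps I expect to be the main obstacle are the two finiteness inputs flagged above that are separate Burnside-type statements in their own right: that the subvarieties $g(W)$, $g\in G$, form a bounded family, so that $Y$ may be taken of finite type — this is a Burnside-type problem for the action of $G$ on $N^1(X)$ (equivalently on a cohomology lattice) — and that the component group $\pi_0(\Aut Y)$ is controlled well enough that $G_1$ has finite index in $G$ without a finite-generation hypothesis on $G$. Both are exactly where the characteristic-$0$ hypothesis is again needed, since torsion-freeness of $p$-adic congruence subgroups of $\Aut$ and finiteness of torsion in algebraic groups over finitely generated fields both fail in positive characteristic; handling the component group cleanly (perhaps by invoking the group-theoretic Burnside-type results treated elsewhere in the paper) is the delicate point.
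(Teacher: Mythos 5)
Your proposal takes a genuinely different route from the paper, and the two gaps you flag at the end are real and not easily dismissed; in fact they are serious enough that the reduction you propose doesn't actually make progress over the statement you're trying to prove.

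The paper works directly with the subvariety $W$: it proves a subvariety-level $p$-adic arc lemma (Lemma~\ref{basic}) asserting that if $\cX$ is a smooth model, $\cU_\gamma$ a residue disk, and $G_\gamma$ a group of automorphisms of $\cX$ preserving $\cU_\gamma$, then $G_\gamma$ has a finite-index subgroup $H_\gamma$ fixing every irreducible subvariety $W$ whose closure meets $\cU_\gamma$. The key step is Poonen's $p$-adic interpolation of iterates applied to a point $z\in W\cap\cU_\gamma$: since $W$ is periodic, each $\tG = (G\circ\iota^{-1})(\theta_1,\dots,\theta_n)$ for $G$ in the vanishing ideal of $W$ is a $p$-adic analytic function with infinitely many zeros, hence vanishes identically, so $\sigma^m(z)\in W$ for all $m$; density of $W\cap\cU_\gamma$ in $W$ (from \cite[Proposition 2.3]{BGT2}) then upgrades this to $\sigma(W)=W$. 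After spreading out over a finitely generated $\bZ$-algebra and choosing a prime of good reduction so that some $\alpha\in W$ has $\fo_v$-integral smooth reduction, the group $H$ preserving the disk $\cU_{r_v(\alpha)}$ has finite index in $G$ simply because there are finitely many residue disks, and $H_\gamma\subseteq H$ then has finite index in $G$ and stabilizes $W$. No Chow/Hilbert parameter scheme, no identity component of $\Aut$, no structure theory of algebraic groups is needed.

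By contrast, your first step --- passing from $W$ to a point of a finite-type parameter scheme $Y$ --- requires precisely that $\{g(W):g\in G\}$ be a bounded family, i.e. that the degrees $\deg g(W)$ be bounded. You flag this, but note it is not a minor technicality: it is close in spirit to the conclusion itself, and nothing in the hypotheses bounds $\deg g(W)$ as $g$ ranges over $G$ (periodicity of $W$ under each cyclic $\langle g\rangle$ only bounds $\deg h(W)$ within each cyclic orbit, not uniformly). Your second gap --- that the congruence subgroup $G_1$ has finite index, which you route through $\pi_0(\Aut Y)$ --- is likewise nontrivial, and the paper sidesteps it by taking $G$ finitely generated (a finite set of generators appears in its proof) and using finiteness of residue disks rather than any structure of $\Aut(Y)$. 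Your Noetherian stabilization of $\mathrm{Fix}(\gamma^{p^a})$, followed by the Chevalley-style argument that a connected algebraic group over a finitely generated characteristic-$0$ field with Zariski-dense torsion $K$-points must be trivial, is a correct and pleasant way to finish the point case, but it is substantially heavier machinery than the direct ``analytic function with infinitely many zeros vanishes'' argument the paper uses, and it rests on the two unresolved reductions. The short version: the paper never reduces to a point, precisely so that the boundedness problem never arises; if you want to salvage your approach you would need an independent proof of boundedness, which is where the difficulty actually lives.
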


An outline of the paper is as follows. We begin by proving Theorem
\ref{arith-burn} in Section \ref{burn-sec}; the technique is quite
similar to that of \cite{BGT2}.  Then, in Section \ref{nil-sec}, we
prove Theorems \ref{nil-same} and \ref{solvable}.  The proof of
Theorem \ref{nil-same} uses the fact that the center of a
finitely generated infinite nilpotent group is infinite, along with some simple
relationships between periodic points of commuting automorphisms that
hold in any characteristic, while Theorem \ref{solvable} relies on a
refined version of Theorem \ref{arith-burn} that holds for points
(Proposition \ref{burn1}) in characteristic 0.  Finally, in Section
\ref{north}, we propose a more general version of Question
\ref{TitsQ1} and prove two Northcott-type results that might be useful
in working towards its proof.

Throughout this paper, our varieties are not assumed to be
irreducible.  When a variety is irreducible we refer to it as an {\em
  irreducible} variety.

\noindent {\bf Acknowledgements.}  We would like to thank Gabriel Vigny and
Keping Huang for helpful conversations.

\section{Burnside orbit theorem}\label{burn-sec}

We begin with a lemma adapted from \cite{BGT2}.  As there, the main tool
is the $p$-adic arc lemma.  While the emphasis there was on bounding
the periods of periodic points, here we focus instead on bounding
the index of the stabilizer subgroup of a subvariety that is periodic under every
element of some group of automorphisms.  It is possible here to make
the bounds effective in the case where $W$ is a point.

\begin{lem}\label{basic}
  Let $\fo_v$ be the integral closure of $\bZ_p$ in some finite extension
  $K_v$ of $\bQ_p$ for some $p$, let $\fm_v$ be the maximal ideal in
  $\fo_v$, and let $k_v$ be the residue field $\fo_v/\fm_v$.  Let
  $\cX$ be a smooth $\bZ_p$-scheme whose generic fiber is a variety
  $X$ over $K_v$.  Let $\gamma \in \cX(k_v)$ and let
  $\cU_\gamma = r^{-1}(\gamma)$, where
  $r: \cX(\fo_v) \lra \cX(k_v)$ is the usual reduction map.   Let
  $G_\gamma$ be a group of automorphisms $\sigma$ of $\cX$ such
  that $\sigma(\cU_\gamma) = \cU_\gamma$.  Then $G_\gamma$ has a
  subgroup $H_\gamma$ of finite index such that $h(W) = W$ for any
  irreducible subvariety $W$ of the generic fiber of $\cX$ such that
  the closure of $W$ in $\cX$ contains a point in $\cU_\gamma$.     
\end{lem}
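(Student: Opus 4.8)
The plan is to use the $p$-adic arc lemma to show that elements of $G_\gamma$ which are ``sufficiently close to the identity'' in an appropriate sense must fix every such $W$, and that the set of such elements has finite index. First I would recall the $p$-adic arc lemma (as in \cite{BGT2}): if $\sigma$ is an automorphism of $\cX$ fixing $\cU_\gamma$ setwise, then after possibly restricting to a further congruence condition there is a $p$-adic analytic map $\Phi \colon \bZ_p \times \cU_\gamma \lra \cU_\gamma$ interpolating the iterates, i.e. $\Phi(n, x) = \sigma^n(x)$ for all $n \in \bZ$. The key point is that $\cU_\gamma$, being the fiber of the reduction map over a smooth point $\gamma$, is $p$-adic analytically isomorphic to an open polydisc, and an automorphism preserving it acts (after a congruence restriction) as a map congruent to the identity modulo $\fm_v$, which can be $p$-adically iterated; this is exactly the situation handled in \cite{BGT1, BGT2}.

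Next I would fix a single $\sigma \in G_\gamma$ and an irreducible subvariety $W$ whose closure meets $\cU_\gamma$, say at a point $w_0 \in \cU_\gamma$. Consider the orbit $\{\sigma^n(w_0)\}_{n \in \bZ}$; by the arc lemma these points all lie on the image of the analytic arc $n \mapsto \Phi(n, w_0)$. Now I would invoke the standard $p$-adic interpolation/algebraicity argument: the Zariski closure of $\{\sigma^n(w_0) : n \in \bZ\}$ is a union of finitely many irreducible components, the arc $\Phi(\cdot, w_0)$ lands in one of them, and because the arc is defined for all of $\bZ_p$ (not just $\bZ$) one deduces that $\sigma$ permutes these components and that some bounded power $\sigma^N$ (with $N$ depending only on the number of components, hence bounded in terms of $\dim X$ and the geometry, not on $w_0$) fixes the component through $w_0$. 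Since that component contains $w_0$, which is a smooth $\fo_v$-point of the closure of $W$, and $W$ is irreducible, the component must be $W$ itself; hence $\sigma^N(W) = W$. Taking $H_\gamma$ to be generated by all such $\sigma^N$ — more carefully, $H_\gamma$ is the subgroup of $G_\gamma$ consisting of elements acting trivially on the finite set of reductions / satisfying the relevant congruence — gives a finite-index subgroup with $h(W) = W$ for every admissible $W$ simultaneously, because the bound $N$ and the congruence condition are uniform in $W$.

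The main obstacle, and the step requiring the most care, is making the finite-index claim uniform over all irreducible $W$ at once: one must extract a single finite-index subgroup $H_\gamma$ that works for every $W$ whose closure meets $\cU_\gamma$, rather than a subgroup depending on $W$. The resolution is that the congruence condition (``$\sigma \equiv \mathrm{id}$ on a suitable jet'') under which the arc lemma applies is a closed condition cutting out a finite-index subgroup of $G_\gamma$ independent of $W$, and once one is in that subgroup the orbit argument forces $\sigma$ to preserve $W$ directly (the number of components of an orbit closure is bounded by $\dim X$, and in fact within the congruence subgroup the arc through $w_0$ is Zariski dense in its own component, forcing that component to be stabilized, not merely permuted). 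I would also need to check the mild point that $\cX$ being smooth over $\bZ_p$ guarantees $\cU_\gamma \ne \emptyset$ exactly when $\gamma$ lifts, and that points of $\cU_\gamma$ are smooth points of any subvariety closure containing them, so that ``contains a point in $\cU_\gamma$'' genuinely pins down $W$ among the components of any orbit closure. With these uniformities in hand, setting $H_\gamma$ equal to this congruence subgroup (intersected with $G_\gamma$) completes the proof.
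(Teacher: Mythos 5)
Your proposal correctly identifies the $p$-adic arc lemma (Poonen-style interpolation of iterates, as in \cite{BGT1, BGT2}) as the engine, and correctly identifies $H_\gamma$ as a congruence subgroup cut out by a linear-part condition, which is what makes the index finite. But the core deduction — the step that actually gets you from ``there is an analytic arc through $w_0$'' to ``$\sigma$ fixes $W$'' — does not work as written.

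You form the orbit closure of the single point $w_0$, note that $\sigma$ permutes its components, and then assert that the component containing $w_0$ ``must be $W$ itself'' because $w_0$ is a smooth $\fo_v$-point of the closure of $W$ and $W$ is irreducible. This implication is false: the Zariski closure of $\{\sigma^n(w_0)\}$ is in general a proper subvariety of $W$ (it could be a point, a curve, etc.), and nothing about $w_0$ being a smooth point of a model forces the orbit closure to fill up $W$. So even if $\sigma^N$ stabilizes that orbit-closure component, you get no control over $\sigma^N(W)$. The paper avoids this trap by never trying to recover $W$ as an orbit closure. Instead, for each element $G$ of the vanishing ideal of $W$ one composes with the interpolating arc to get a $p$-adic analytic function $\tG$ on $\bZ_p$; the hypothesis that $W$ is periodic (implicit in the statement, and used crucially in the proof) makes $\tG$ vanish on an infinite subset of $\bZ$, hence identically on $\bZ_p$, hence $\sigma^m(z) \in W$ for \emph{all} $m$. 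That shows $\sigma(z) \in W$ for every $z \in W \cap \cU_\gamma$, and one then finishes by citing that $W \cap \cU_\gamma$ is Zariski dense in $W$ (from \cite[Proposition 2.3]{BGT2}), so $\sigma(W) \subseteq W$ and hence $\sigma(W) = W$. Your version also never invokes the periodicity of $W$, which is where the ``infinitely many zeros'' input must come from; without it there is no reason the arc stays inside $W$. In short: the right move is to use the defining equations of $W$ together with the arc and a zero-counting argument plus density, not to reconstruct $W$ from an orbit closure.
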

\begin{proof}
  The proof is very similar to the proof of \cite[Theorem
  3.1]{BGT2}.  Let $\fm_v$ denote the maximal ideal in $\fo_v$, let
  $\pi$ denote a generator for $\fm_v$,  and let
  $k_v$ denote the residue field $\fo_v/\fm_v$.  
  Since $\cX$ is a smooth $\fo_v$-scheme, for any $\alpha \in
  \cX(k_v)$, its completed local ring  ${\hat \cO_{\alpha}}$ is
  isomorphic to $\fo_v[[z_1,\dots, z_n]]$, where $n$ is the dimension
  of $X$.  

  Arguing exactly as in the proof of \cite[Proposition~2.2]{BGT1} we then
obtain that there is a $v$-adic analytic
isomorphism $\iota: \cU_\gamma \lra \fo_v^n$, such that for any $\sigma
\in G_\gamma$, there are power series $F_1, \dots, F_n \in \fo_v[[z_1,
\dots, z_n]]$ with the properties that
\begin{enumerate}
\item[(i)] each $F_i$ converges on $\fo_v^n$;
\item[(ii)] for all $(\beta_1, \dots, \beta_n) \in \fo_v^n$, we have
\begin{equation}\label{power eq}
\iota(\sigma( \iota^{-1} (\beta_1, \dots, \beta_n))) =   (F_1(\beta_1,\dots, \beta_n),\dots, F_n(\beta_1,
\dots, \beta_n)); \text { and }
\end{equation}
\item[(iii)] each $F_i$ is congruent to a linear polynomial mod $v$ (in
  other words, all the coefficients of terms of degree greater than one
  are in the maximal ideal $\fm_v$ of $\fo_v$). Moreover, for each $i$, we have
$$F_i(z_1,\dots,z_n)=\frac{1}{\pi}\cdot H_i(\pi  z_1,\dots, \pi z_n),$$
for some $H_i\in \fo_v[[z_1,\dots, z_n]]$.
\end{enumerate}

Writing $\vb: = (\beta_1, \dots, \beta_n)$ and $\cF_\sigma:=\iota
\sigma \iota^{-1}$, we thus have
\begin{equation}\label{p}
\cF_\sigma (\vb) \equiv C_\sigma+L_\sigma (\vb) \pmod{\pi}
\end{equation}
for a $C_\sigma\in\fo_v^n$ and an $n \times n$ matrix $L_\sigma$ with
coefficients in $k_v$.  Since $\sigma$ is an \'etale map of
$\fo_v$-schemes, $\bL_\sigma$ must be invertible.  Let $H_\gamma$ be
the subgroup of $G$ consisting of all $\sigma \in G$ such that
$L_\sigma \equiv I_n \pmod {\pi^{e+1}}$ where $I_n$ is the
$n \times n$ identity matrix with coefficients in $k_v$ and $e$ is the
ramification index of $K_v$ over $\bZ_p$.  Then $H_\gamma$ has finite
index in $G$ since $n$ and $\fo_v / \fm_v^{e+1}$ are finite.  We will
show that $h(W) = W$ for any $h \in H_\gamma$.  Pick
$z \in \cU_\gamma \cap W$ and $\sigma \in H_\gamma$ and let
$\vb = \iota(z)$.  By \cite{Poonen}, there are $p$-adic power series
$\theta_1, \dots, \theta_n: \bZ_p \lra U_\gamma$ convergent on $\bZ_p$
such that $\cF^n(\vb) = (\theta_1(m), \dots, \theta_n(m))$ for all
$m \in \bZ_p$.

Now, let $G$ be any element of the vanishing ideal of $W$.  Then
$G \circ \iota^{-1}$ is a $v$-adic analytic power series
convergent on $\fo_v^g$, so
\[ \tG = (G \circ \iota^{-1}) (\theta_1(z), \dots, \theta_n(z))\] is a
$p$-adic analytic power series convergent on $\bZ_p$.

If $z \in W$ and $W$ is periodic, then $\tG(m)$ must vanish on all $m$
divisible the period of $W$, so $\tG$ is a power series convergent on
$\bZ_p$ that has infinitely many zeros.  This means that $\tG$
vanishes uniformly on $\bZ_p$, so $G$ must vanish on $\sigma^m(z)$ for
all natural numbers $m$.  Since this holds for every $G$ in the
vanishing ideal of $W$, we must have $\sigma^m(z) \in W$ for all $m$
so in particular, we have $\sigma(z) \in W$.

Now, $W \cap \cU_\gamma$ is dense by \cite[Proposition 2.3]{BGT2}, so
it follows that $\sigma(x) \in W$ for a Zariski dense subset of $x \in W$,
so $\sigma(W) = W$, as desired.

\end{proof}

In the case of points, we obtain a little bit more information.  

\begin{prop}\label{burn1}
Let $\fo_v$ be the integral closure of $\bZ_p$ in some finite extension
$K_v$ of $\bQ_p$ for some $p$.  Let $\cX$ be a $\bZ_p$-scheme
whose generic fiber is a variety $X$ over $K_v$, let $G$ be a group
of automorphisms of $\cX$, and let $T$ be the set
of all points in $\cX(\fo_v)$ that are periodic under every element of
$G$.  Let $M$ be the set of all elements of $G$ such that $ht = t$ for
all $t \in T$.  

Then we have the following:
\begin{itemize}
\item [(i)] $M$ is normal in $G$;
\item[(ii)] $[G:M]$ is finite; and
  \item [(iii)] for any $t \in T$, the orbit $Gt$ is finite. 
\end{itemize}
\end{prop}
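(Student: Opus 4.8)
The plan is to reduce everything to Lemma \ref{basic} by a compactness/covering argument on the $\fo_v$-points of $\cX$, then extract the extra structure (normality, and the finiteness of individual orbits) from the fact that we are dealing with fixed points rather than merely invariant subvarieties.

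First I would handle smoothness: a $\bZ_p$-scheme need not be smooth, but $\cX(\fo_v)$ is covered by the $\fo_v$-points lying in the smooth locus together with finitely many residue-disc contributions from the singular locus, and more cheaply one may simply pass to an open dense smooth subscheme and handle the boundary separately, or invoke that $\cX(\fo_v)$ is compact and covered by finitely many residue discs $\cU_\gamma = r^{-1}(\gamma)$ indexed by $\gamma \in \cX(k_v)$. For the residue discs meeting the smooth locus, Lemma \ref{basic} applies directly. The key point is that $G$ permutes the residue discs: each $\sigma \in G$ induces a permutation of $\cX(k_v)$ via reduction, and since $\cX(k_v)$ is finite, the subgroup $G_0 \subseteq G$ fixing every residue disc setwise has finite index in $G$. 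For each such disc $\cU_\gamma$ (in the smooth locus), $G_0 \subseteq G_\gamma$ in the notation of Lemma \ref{basic}, so there is a finite-index subgroup $H_\gamma \subseteq G_0$ with $h(W) = W$ for every irreducible $W$ whose closure meets $\cU_\gamma$; applying this to $W = \{t\}$ a single point $t \in T \cap \cU_\gamma$ gives $h(t) = t$. Intersecting the finitely many $H_\gamma$ and taking into account the finitely many residue discs that meet the singular locus (which contain only finitely many reductions and hence, after a further finite-index reduction, can be made to consist of points fixed by a common finite-index subgroup, e.g. by a direct analogue of the linearization argument or simply because there a periodic point has bounded period and the stabilizer of a point of bounded period is finite index), we obtain a single finite-index subgroup $M' \subseteq G$ fixing every $t \in T$ pointwise. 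This proves (ii) with $M' \subseteq M$, hence $[G:M] \le [G:M'] < \infty$.

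For (i): $M$ is the set of $\sigma \in G$ with $\sigma t = t$ for all $t \in T$, i.e. $M = \bigcap_{t \in T} \Stab_G(t)$. The set $T$ is $G$-invariant: if $t$ is periodic under every element of $G$, then so is $\tau(t)$ for any $\tau \in G$, since $\sigma^n(\tau t) = \tau(\tau^{-1}\sigma^n\tau)(t)$ and $\tau^{-1}\sigma^n\tau$ runs through powers-conjugates which are again periodic-compatible — more precisely $\tau(t) \in \cX(\fo_v)$ because $\tau$ is an automorphism of $\cX$, and it is periodic under each $\sigma \in G$ because $t$ is periodic under $\tau^{-1}\sigma\tau \in G$. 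Hence for $\tau \in G$ and $m \in M$ we have, for every $t \in T$, $(\tau m \tau^{-1})(t) = \tau(m(\tau^{-1}t)) = \tau(\tau^{-1} t) = t$ since $\tau^{-1} t \in T$; thus $\tau M \tau^{-1} = M$ and $M$ is normal.

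Finally (iii): fix $t \in T$. The orbit map $G \to Gt$, $\sigma \mapsto \sigma t$, factors through $G/\Stab_G(t)$, and $M \subseteq \Stab_G(t)$, so $|Gt| \le [G : \Stab_G(t)] \le [G:M] < \infty$ by (ii). The main obstacle I expect is the first step: cleanly deducing the \emph{global} finite-index statement (ii) from the \emph{local} Lemma \ref{basic}, in particular dealing with residue discs on which $\cX$ fails to be smooth — one wants either to know that $T$ avoids the singular locus, or to argue that over the singular locus periodic points have bounded period (so bounded orbit) by a separate, more elementary argument, and then patch. Everything after that — normality and boundedness of a single orbit — is soft group theory once $[G:M]$ is known to be finite.
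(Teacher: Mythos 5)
The overall architecture you propose is exactly the paper's: cover $\cX(\fo_v)$ by residue discs $\cU_\gamma$, apply Lemma~\ref{basic} on the discs contained in the smooth locus to get finite-index subgroups $H_\gamma$, intersect them, and then extract (i) and (iii) as soft group-theoretic consequences of (ii). Your derivation of (iii) from (ii) via $M \subseteq \mathrm{Stab}_G(t)$ matches the paper, and your direct proof of (i) via the $G$-stability of $T$ (because $t$ periodic under $\tau^{-1}\sigma\tau$ implies $\tau t$ periodic under $\sigma$) is equivalent in content to the paper's, which instead deduces (i) from (iii); both are correct and essentially interchangeable.

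The gap you flag yourself — what to do on residue discs lying over the non-smooth locus of $\cX$ — is a real gap, and neither of your suggested patches would close it. Asserting that periodic points over the singular locus have \emph{bounded period} is precisely the kind of conclusion that Lemma~\ref{basic} (and its source, the $p$-adic arc lemma) delivers, and that machinery needs smoothness to linearize; moreover, even a bound on the period of $t$ does not by itself force $\mathrm{Stab}_G(t)$ to have finite index in $G$, which is what (ii) requires. Likewise, a ``direct analogue of the linearization argument'' over the singular locus is again Lemma~\ref{basic}, which you cannot apply there. The paper's actual fix is a Noetherian induction on the dimension of the generic fiber: one writes $\cX = \cX' \cup \cY$ with $\cX'$ smooth over $\bZ_p$ and the generic fiber of $\cY$ of strictly smaller dimension, notes that any automorphism of $\cX$ preserves both $\cX'$ and $\cY$ (the smooth locus is intrinsic), applies Lemma~\ref{basic} disc by disc on $\cX'$ as you do, applies the inductive hypothesis to the $G$-action on $\cY$ to get another finite-index subgroup $M_Y$, and takes the intersection. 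The paper also reduces at the outset to $X$ geometrically irreducible, since the stabilizer of each component already has finite index in $G$. That induction on dimension is the missing ingredient needed to make your patching step rigorous.
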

\begin{proof}
  We see that (iii) follows immediately from (ii) since for any
  $t \in T$, the stabilizer $G_t$ of $t$ contains $M$.  Note that we
  may assume that $X$ is geometrically irreducible in (i) since the
  stabilizer of each component of $X$ has finite index in $G$.
  Likewise, (i) follows easily from (iii) since  if $Gt$ is finite
  for all $t \in T$, then $gt \in T$ for any $g
  \in G$ and $t \in T$, so $ghg^{-1}(t) = g g^{-1} t = t$ for
  any $h \in M$, which means that $g h g^{-1} \in M$ for any $g \in G$
  and $h \in M$.   Hence, it only remains to prove (ii).  
  
We use induction on the dimension of $X$ and Lemma \ref{basic} to prove
(ii).  If $X$
has dimension 0, this is trivial.  Otherwise, we may write $\cX$ as
$\cX' \cup \cY$ where $\cX'$ is smooth over $\bZ_p$ and the generic
fiber of $\cY$ has dimension less than $\dim X$.  An automorphism of a
scheme preserves its smooth locus so $\sigma(\cX') = \cX'$ and
$\sigma(\cY) = \cY$.  We let $M_Y$ be the subgroup of of $G$ that
fixes all elements of $T$ in $\cY$ and $M'$ be the subgroup of $G$
that fixes all elements of $T$ in $\cX'$.  Then $[G:M_Y]$ is finite,
by the inductive hypothesis, so it will suffice to show that $[G:M']$
is finite
since $[G:M_Y \cap M'] \leq [G:M'] \cdot [G:M_Y]$.  

Now, since there are finitely many $\gamma \in \cX'(k_v)$, there are
finitely many residue classes $\cU_\gamma$ of $\fo_v$-points as in
Lemma \ref{basic} containing elements of $T$.  Since $G$ permutes the
residue classes $\cU_\gamma$, there is a subgroup $H$ of finite index
in $G$ such that $h(\cU_\gamma) = \cU_\gamma$ for all $h \in H$ and
all residue classes $\cU_\gamma$. By Lemma \ref{basic}, for each
$\gamma \in \cX'(k_v)$, there is a subgroup $H_\gamma$ of finite index
in $H$ such that $ht =t$ for all $t \in \cU_\gamma$ that are periodic
under all elements of $H$.  Let
$M'' = \cap_{\gamma \in \cX'(k_v)} H_\gamma$.  Now if $t \in T$, then
$t$ is periodic under every element of $H$ and furthermore
$t \in \cU_\gamma$ for some $\gamma \in \cX'(K_v)$; hence, $M''$ fixes
every element of $T$.  Since $M''$ is an intersection of subgroups of
finite index in $G$, it has finite index in $G$.  Clearly, $M'$
contains $M''$ so $[G:M']$ is also finite, as desired.
\end{proof}

The proof of Theorem \ref{arith-burn} is now essentially identical to
the proof of \cite[Theorem 1.2]{BGT2}.

\begin{proof}[Proof of Theorem \ref{arith-burn}]
  As in the proof of Proposition \ref{burn1}, it suffices to show this
  when $X$ is geometrically irreducible.  Let
  $\sigma_1,\dots, \sigma_m$ be a finite set of generators for $H$,
  and let $K$ be a finitely generated field such that $X$,
  $\sigma_1,\dots, \sigma_m$ are all defined over $K$.  We may further
  assume that some point $\alpha \in W$ is defined over $K$ (if $W$ is
  zero-dimensional, this will just be $\alpha$ itself).  Let $R$ be a
  finitely generated $\bZ$-algebra containing all the coefficients of
  all the polynomials defining $X$ in some projective space, along
  with all the coefficients of all the polynomials defining all the
  $\sigma_i$ locally, as in the proof of \cite[Theorem 4.1]{BGT1}.  By
  \cite[Proposition 4.3]{BGT1}, since a finite intersection of dense
  open subsets is dense, we see that there is a dense open subset $U$
  of $\Spec R$ such that:
\begin{enumerate}
\item there is a scheme $\cX_{U}$ that is quasiprojective over $U$, and whose generic fiber equals $X$;
\item each fiber of $\cX_{U}$ is geometrically irreducible;
\item each $\sigma_i$ extends to an automorphism ${\sigma_i}_{U}$ of $\cX_{U}$; and
\item $\alpha$ extends to a smooth section $U \lra \cX_U$.
\end{enumerate}
Now, arguing as in \cite[Proposition 4.4]{BGT1}, and using \cite[Lemma
3.1]{Bell}, we see that there is an embedding of $R$ into $\bZ_p$ (for
some prime $p\ge 5$), and a
$\bZ_p$-scheme $\cX_{\bZ_p}$ such that
\begin{enumerate}
\item $\cX_{\bZ_p}$ is quasiprojective over $\bZ_p$, and its
  generic fiber is isomorphic to $X \times_K \bQ_p$;
\item both the generic and the special fiber of $\cX_{\bZ_p}$ are geometrically irreducible;
\item each $\sigma_i$ extends to an automorphism $(\sigma_i)_{\bZ_p}$ of $\cX_{\bZ_p}$; and
\item $\alpha$ extends to a smooth section $ \Spec \bZ_p \lra \cX_{\bZ_p}$.
\end{enumerate}

Let $r_v(\alpha) = \gamma$ and let $H$ be the subgroup of $G$
consisting of all $h \in G$ such that $h(\cU_\gamma) = \cU_\gamma$
where $\cU_\gamma = r_v^{-1}(\gamma)$ as before.  Then by Lemma
\ref{basic}, there is a finite index subgroup $H_\gamma$ of $H$ such
that $h(W) = W$ for all $h \in H$.  Since $H_\gamma$ has finite index
in $G$, we see that $GW$ must be finite.

\end{proof}

We pose the following general question.

\begin{quest}\label{BurnsQ}
Let $S$ be a finitely generated group of finite morphisms from a
variety $X$ to itself.  Suppose that $x \in S$ is preperiodic under
every element of $S$.  Is it true that the orbit $Sx$ must be finite?
 \end{quest} 

We are able to prove that Question \ref{BurnsQ} has a positive answer
for self-maps of projective space.  
 
\begin{thm}\label{polar-burn}
  Let $S$ be a finitely generated semigroup of morphisms
  $f: \bP_K^n \lra \bP_K^n$ for some $n$ defined over a field $K$.
  Then for any $x \in \bP^n$ such that $x \in \Per(g)$ for all
  $g \in S$, the orbit $Sx$ is finite.
\end{thm}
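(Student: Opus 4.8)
The plan is to bound a Weil height on the orbit $Sx$ and then invoke Northcott finiteness. Fix a finite generating set $f_1,\dots,f_r$ of $S$, and replace $K$ by the subfield generated by the coordinates of $x$ and the coefficients of the $f_i$, so that $K$ is finitely generated and $Sx\subseteq\bP^n(K)$; fix a Weil height $h$ on $\bP^n$ for which the Northcott property holds over $K$ (a Moriwaki height in characteristic $0$; in characteristic $p$ the ordinary product-formula height works, since $K$ has only finitely many constants). It then suffices to find $B$ with $h(y)\le B$ for every $y\in Sx$. Fix $\epsilon>0$ with $|h(f_i(y))-(\deg f_i)\,h(y)|\le\epsilon$ for all $i$ and all $y\in\bP^n(\Kbar)$.

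The core case is the one in which every generator has degree $\ge 2$, so that every $w\in S$ is a morphism of some degree $d_w\ge 2$ with a canonical height $\hat h_w\ge 0$ satisfying $\hat h_w\circ w=d_w\hat h_w$, $\|\hat h_w-h\|_\infty<\infty$, and $\hat h_w(y)=0$ for $y\in\Prep(w)$. The point to emphasize is that the comparison is \emph{uniform}: from $\hat h_w-h=\sum_{k\ge 0}d_w^{-(k+1)}\big((h\circ w-d_wh)\circ w^k\big)$ and the fact that each letter of $w$ inflates heights by a factor $\ge 2$, one obtains $\|\hat h_w-h\|_\infty\le \tfrac{d_w}{d_w-1}\,\epsilon\le 2\epsilon$ independently of $w$. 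Since $x\in\Per(w)\subseteq\Prep(w)$ gives $\hat h_w(wx)=d_w\hat h_w(x)=0$, we get $h(wx)\le 2\epsilon$ for every $w$, so $h$ is bounded on $Sx$ and Northcott finishes this case.

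In general some generators are automorphisms (degree $1$, in $\PGL_{n+1}(K)$); let $S_1\subseteq S$ be the sub-semigroup they generate. If the group $\langle S_1\rangle$ happens to be finite — which in characteristic $p$ holds whenever all of these generators have finite order, a finitely generated torsion linear group over a finitely generated field being finite — the telescoping still applies: a maximal run of degree-$1$ letters in a word is a single element of $\langle S_1\rangle$ and so changes heights by at most a fixed constant $C$, yielding $\|\hat h_w-h\|_\infty\le 2(\epsilon+C)\tfrac{d_w}{d_w-1}$ uniformly, and one concludes as before. The genuinely hard case — and the main obstacle — is when $\langle S_1\rangle$ is infinite, since then automorphisms can inflate heights without bound under iteration and the uniform estimate collapses. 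My plan here would be to fall back on the $p$-adic Burnside machinery of Section~\ref{burn-sec} in characteristic $0$: after spreading $X$, the $f_i$, and $x$ out over a finitely generated $\bZ$-algebra and embedding it in $\bZ_p$ for a good prime, $x$ reduces to a point $\gamma$ of the special fibre; the degree-$\ge 2$ generators only contract into the residue disk $\cU_\gamma$, while the degree-$1$ generators permute residue disks and, on a finite-index sub-semigroup, preserve $\cU_\gamma$ and act on it congruently to the identity, so the $p$-adic arc lemma from the proof of Lemma~\ref{basic} confines the entire $S$-orbit of $x$ to a single analytic arc through $x$, which with periodicity forces it to be finite. In characteristic $p$ with $\langle S_1\rangle$ infinite I would argue similarly through the linear action, exploiting again that $K$ has only finitely many constants.

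Putting the cases together, $Sx$ has bounded Weil height inside $\bP^n$ over a finitely generated field, hence is finite. I expect the real difficulty to be exactly the infinite-automorphism case: since $S$ is only a semigroup, Theorem~\ref{arith-burn} cannot be quoted off the shelf, so one must analyse the semigroup action on $\cU_\gamma$ (respectively the linear action in characteristic $p$) directly, keeping careful track of the critical loci of the reductions of the degree-$\ge 2$ generators and of how the two kinds of generators interleave inside a single word.
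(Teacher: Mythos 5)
Your core case is correct, and the uniform telescoping bound $\|\hat h_w - h\|_\infty \le \tfrac{d_w}{d_w-1}\epsilon \le 2\epsilon$ (when every letter of $w$ has degree $\ge 2$) is a nice observation that genuinely differs from the paper's argument: you bound the Weil height directly on $Sx$, while the paper argues by contradiction using a \emph{single} canonical height. However, the rest of the proposal has real gaps.

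First, you have mis-located the hard case. As soon as $S$ contains one map $f$ of degree $>1$, the paper's contradiction argument finishes the proof irrespective of what automorphisms are present: if $Sx$ were infinite, Northcott for $\hat h_f$ produces some $g\in S$ with $\hat h_f(gx)\neq 0$, and since $\hat h_f(f^n g x)=(\deg f)^n \hat h_f(gx)\to\infty$ while $|\hat h_{f^n g}-\hat h_f|$ is bounded uniformly in $n$, one finds $n$ with $\hat h_{f^n g}(f^n g x)\neq 0$, i.e.\ $x\notin\Prep(f^n g)$, contradicting the hypothesis. This uses only one canonical height and so is immune to the failure of uniformity you worry about when $\langle S_1\rangle$ is infinite. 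By contrast, your ``hard case'' sketch would not go through as written: Lemma~\ref{basic} and Poonen's interpolation parametrize iterates of \emph{invertible} maps $p$-adically congruent to the identity on a residue disk $\cU_\gamma$; a degree-$\ge 2$ morphism need not even stabilize $\cU_\gamma$, let alone linearize there, so the arc lemma cannot confine the orbit. Your ``medium case'' also assumes $\langle S_1\rangle$ is finite without justification: the hypothesis $x\in\Per(g)$ does not make the degree-$1$ generators torsion (for instance $x\mapsto 2x$ fixes $0$ but has infinite order), so Burnside--Schur is not available.

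Second, the case that actually requires new ideas is when $S$ consists \emph{entirely} of automorphisms generating an infinite group, and your proposal contains no argument for it. The paper first bounds the period of $x$ uniformly over all $g\in S$ (via \cite[Theorem 1.1]{BGT2} in characteristic $0$, and by a direct linear-algebra argument in $\PGL_{n+1}$ in characteristic $p$); it then passes to the Zariski closure $T$ of $S$ in $\Aut(\bP^n)$, shows $T$ is a group, and deduces that the group $G$ generated by $S$ still satisfies $g^M x=x$ for all $g\in G$; only then does it apply Theorem~\ref{arith-burn} in characteristic $0$, or, in characteristic $p$, residual finiteness of $\Aut$ of a variety together with Zelmanov's solution of the restricted Burnside problem. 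This semigroup-to-group reduction via Zariski closure, and the Zelmanov input in positive characteristic, are absent from your plan; the $p$-adic arc lemma alone produces a finite-index subgroup of a \emph{group}, which is not directly applicable to the semigroup $S$.
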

\begin{proof}

  We first treat the case where $S$ is composed entirely of
  automorphisms (but is not necessarily a group itself).  We will
  begin by showing that there is an $M$ such that $s^Mx = x$ for all
  $s \in S$.  If $K$ has characteristic $p$, then for any
  $(n+1) \times (n+1)$ matrix $N$, we have that $N^{p^{n+1}}$ is
  diagonalizable (this can be seen by putting it in Jordan canonical
  form).  Since fixed points of elements of $\PGL_{n+1}(K)$ correspond
  to eigenvectors of $(n+1) \times (n+1)$ matrices, it follows that
  for any $g \in \PGL_{n+1}(K)$ and any $y \in \Per(g)$, we have
  $g^{p^{n+1}} y = y$ when $K$ has characteristic $p$; thus, we have
  $s^{p^{n+1}} x = x$ for all $s \in S$.  When $K$ has characteristic
  $0$, we begin by arguing as in the proof of Theorem
  \ref{arith-burn}. Let $R$ be a finitely generated $\bZ$-algebra
  containing all
  the coefficients of all the polynomials defining some set of
  generators for $S$ locally.  Then, again by \cite[Proposition
  4.3]{BGT1} there is a $p \geq 5$ and a model $\cX$ for $X$ over
  $\bZ_p$ such that $x$ extends to a point in $\bP_{\bZ_p}^n(\bZ_p)$
  and each element of $S$ extends to an automorphism of
  $\bP_{\bZ_p}^n$.  Then by \cite[Theorem 1.1]{BGT2}, there is an $M$
  (depending only on $K$, $n$, and a place of good reduction for the
  generators of $S$) such that $s^M(x) = x$ for all for all $s \in S$.

  Let $T$ be the Zariski closure
  of $S$ in $\Aut(\bP^n)$.  Then $T$ is clearly a semigroup, so for
  any $t \in T$, the sets $t^n T$ form a chain
  \[ T \supseteq tT \supseteq \cdots \supseteq t^n T \supseteq
    \cdots \] of Zariski closed subsets of $T$.  Any such chain
  eventually stabilizes (by finiteness of dimension), so there are
  $i > j$ such that $t^i T = t^j T$, which means that $t^{i-j}$ is a
  unit in $T$ and thus that $a$ is a unit in $T$.  Hence, $T$ is a
  group containing the group $G$ generated by $S$.  Since the
  condition $g^M(x) = x$ defines a closed subset of elements
  $g \in\Aut(\bP^n)$, we must have $g^M(x) = x$ for all $g \in A$ and
  thus for all $g \in G$. In characteristic 0, it then follows from
  Theorem \ref{arith-burn} that $Gx$ is finite.

  When we are in characteristic $p$, we need slightly more complicated
  argument to finish our proof.  Let $H$ be the subgroup generated by
  all $M$-th powers of elements of $G$.  Note that $H$ is normal
  (since a conjugate of an $M$-th power is itself an $M$-th power), so
  every element fixes not only $x$ but also every element in its orbit
  $Gx$. Let $V$ be a component of the Zariski closure of $Gx$.  Then
  $H$ fixes every element of $V$ as well, and the subgroup $G_V$ of
  $G$ that sends $V$ to itself must have finite index in $G$, since
  $G$ has finitely many components.  The group $G_V/H$ acts on $V$ and
  is torsion with bounded exponent.  We let $A$ denote $G_V/H$ and let
  $m$ denote the size of some generating set for $A$ (note that $G_V$
  is finitely generated by the Schreier index formula since it has
  finite index in $G$ and $G$ is finitely generated).  We will show
  that $A$ is finite, which will imply that $H$ has finite index in
  $G$ and complete our proof.  First, since $A$ is a subgroup of
  $\Aut(V)$, it is residually finite (that is, for any element of
  $g \not= 1$ in $A$, there is a map from $A$ to a finite group that
  does not send $g$ to the identity) by \cite[Theorem 1.1]{Bass2}, so
  there is an embedding
  \[ \theta: A \lra \prod_{i=1}^\infty B_i \]
  where each $B_i$ is a finite group.  For any $j$ let $\rho_j: \prod_{i=1}^\infty B_i
\lra  \prod_{i=1}^j B_i$ be the projection map onto the first $j$
coordinates.  Then we have an ascending chain of groups
\begin{equation}\label{chain}
\rho_1\circ\theta (A) \subseteq \rho_2\circ \theta(A) \subseteq
\cdots. 
\end{equation}

Now each group $\rho_j \circ \theta(A)$ is finite with
exponent $M$ and is generated by $n$ elements. Note that $M$ is a
power of $p$, so we may apply Zelmanov's theorem \cite{Z1,Z2} on the restricted
Burnside problem to conclude that there is a bound on the size of the
groups $\rho_j \circ \theta(A)$, so the chain in \eqref{chain}
eventually stabilizes.  This means that $\rho_j\circ
\theta$ is injective for some $j$, which means that $A$ is finite, as
desired.

  Now suppose instead that $S$ contains a morphism
  $f: \bP^n \lra \bP^n$ of degree greater than 1.  We will argue by
  contradiction, and show that if $Sx$ is infinite, then there must be
  some $s \in S$ such that $x \notin \Per(s)$.  Since $S$ is generated
  by finitely many elements we may assume that $K$ is finitely
  generated. We now argue as in \cite[Section 3]{BHPT}.  If $K$ is a
  number field, let $h$ denote a Weil height on $\bP^n$; if $K$ is
  not, let $h$ denote a Moriwaki height on $\bP^n$ (see \cite{Mori1,
    Mori2}).  For $s \in S$ of degree greater than 1, we let $h_s$ be
  the corresponding canonical height given by
  $h_s(z) = \lim_{n \to \infty} \frac{h(s^n(z))}{(\deg s)^n}$; note
  that this converges uniformly for all $z$ (see \cite{CS93}), so in
  particular $|h_s - h|$ is bounded.  Recall that $h_s(z) = 0$ if and
  only if $z$ is preperiodic under $s$ (see \cite[Corollary
  3.5]{BHPT}).  Now, if $Sx$ is not finite, then there is some
  $g \in S$ such that $h_f(gx) \not= 0$ by the Northcott property for
  canonical heights (see \cite{Northcott} and \cite[Section
  3.2.2]{BHPT}). Let $S'$ be the semigroup
  generated by $f$ and $fg$.  Then there is a $C$ such that
  $|h_s(z) - h_f(z)| < C$ for all $z \in \bP^n$. Since
  $h_f(f^ng x) = (\deg f)^n h_f(gx)$ and $h_f(gx) \not= 0$, there is
  an $n$ such that $h_f(f^n g x) > C$; this means that
  $h_{f^n g}(f^ng x) \not= 0$.  Thus, $f^n gx \notin \Prep(f^ng)$, so
  $x \notin \Prep(f^ng)$, and our proof is complete.
\end{proof}

  % The following lemma is are just for automorphisms, though it likely
%   holds more generally.  Note that it is false however in
%   characteristic $p$.

% \begin{lem}\label{allper}
% Let $X$ be a variety defined over a finite field of characteristic
%   0 and let $\sigma: X \lra X$ be an automorphism.  If $V$ is an irreducible
%   subvariety of $X$ such that every element of $V$ is in
%   $\Per(\sigma)$, then there is an $n$ such that $\sigma^n(x) = x$ for
%   all $x \in V$
% \end{lem}
% \begin{proof}
% By IMRN paper with Dragos, there is some ring of integers $\fo_v$ in a
% finite extension of some $\bZ_p$, such that the points in $V(\fo_v)$
% are dense in $V$.  By Proposition \ref{period}, there is some $n$ such
% that $\sigma^n(x) = x$ for all periodic $x \in V(\fo_v)$, and thus
% $\sigma^n(x) = x$ for all $x \in V$ (since having period dividing $n$
% is a closed condition).  
% \end{proof}

% \begin{proof}[Proof of Theorem \ref{auto-contain}]  
%   Let $C$ be a component of $\Per_n(\sigma)$,   Then $\tau(C)$ is a
%   component of $\Per_m(\tau)$ for some $\tau$, by Lemma \ref{allper}.
%   Thus, if $\alpha \in \Per^{**}(\sigma)$, then $\tau^n(\alpha)$ is in
%   $\Per^*(\tau)$ for all $n$.  Now, there is again some model for
%   $\cX$ over $\bZ_p$ such that $\alpha$ extends to a point in
%   $\cX(\bZ_p)$ and $\sigma$ and $\tau$ both extend to maps from $\cX$
%   to itself, so by Lemma \ref{period}, there is a finite set
%   containing all $\tau^n(\alpha)$, which means that $\alpha$ is
%   periodic under $\tau$.  
% \end{proof}

\section{Nilpotent and solvable} \label{nil-sec}

 We will begin by proving Theorem \ref{nil-same}.  We start with two
 lemmas.  The first is standard.  See \cite[5.2.22]{Robinson} for a
 proof.  
 
 \begin{lem}\label{center}
   If $G$ is finitely generated, nilpotent, and infinite then its center $Z(G)$
   contains an element of infinite order.
   \end{lem}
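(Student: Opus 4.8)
The plan is to argue by induction on the nilpotency class $c$ of $G$, establishing the slightly sharper assertion that every finitely generated infinite nilpotent group of class at most $c$ has an element of infinite order in its center. The base case $c\le 1$ is immediate: a finitely generated infinite abelian group is isomorphic to $\bZ^r\times F$ with $r\ge 1$ and $F$ finite, and such a group equals its own center.

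For the inductive step I would take $G$ of class $c\ge 2$ and consider $\gamma_c(G)$, the last nontrivial term of the lower central series. Since $[\gamma_c(G),G]=\gamma_{c+1}(G)=1$, this subgroup lies in $Z(G)$, and it is finitely generated because a finitely generated nilpotent group is polycyclic, hence Noetherian. If $\gamma_c(G)$ is infinite, then it is a finitely generated infinite abelian subgroup of $Z(G)$ and we are done. So the remaining case is $\gamma_c(G)$ finite, in which case $G/\gamma_c(G)$ is finitely generated, infinite, and nilpotent of class at most $c-1$; the inductive hypothesis then produces an element $\bar z\in Z(G/\gamma_c(G))$ of infinite order.

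The hard part is the lifting step: a preimage $z\in G$ of $\bar z$ has infinite order but is typically not central — one only knows that $[z,g]\in\gamma_c(G)$ for every $g\in G$. Here I would exploit the fact that $\gamma_c(G)$ is central in $G$: the commutator identity $[z,gh]=[z,h]\,[z,g]^h$ together with centrality of $[z,g]$ shows that $g\mapsto[z,g]$ is a homomorphism $G\to\gamma_c(G)$, so its image is a subgroup of a finite group; taking $k$ to be the exponent of $\gamma_c(G)$ gives $[z,g]^k=1$ for all $g$, and then $[z^k,g]=[z,g]^k=1$ (again using centrality of $[z,g]$). Hence $z^k\in Z(G)$, and $z^k$ has infinite order since $z$ does, which completes the induction. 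It is precisely this maneuver that dictates quotienting by $\gamma_c(G)$ rather than by the finite torsion subgroup or by $Z(G)$ itself, since centrality of the subgroup we kill is what makes the correction from $z$ to $z^k$ go through.
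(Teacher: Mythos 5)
The paper does not prove this lemma itself; it just cites Robinson \cite[5.2.22]{Robinson}. Your self-contained argument is correct and is essentially the textbook proof. You induct on nilpotency class, using that a finitely generated nilpotent group is polycyclic and hence Noetherian (so the central subgroup $\gamma_c(G)$ is finitely generated), and split on whether $\gamma_c(G)$ is infinite (then the abelian base case finishes) or finite (then pass to the quotient and lift). The lifting step is the only delicate point and you handle it correctly: for a preimage $z$ of a central element $\bar z$ of infinite order, every $[z,g]$ lands in the central subgroup $\gamma_c(G)$; centrality gives both that $g\mapsto[z,g]$ is a homomorphism into the finite abelian group $\gamma_c(G)$ and that $[z^k,g]=[z,g]^k$, so taking $k$ to be the exponent of $\gamma_c(G)$ makes $z^k$ a central element of infinite order. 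This matches the standard argument in spirit; Robinson's treatment runs the induction along the upper central series (quotienting by $Z(G)$ rather than by $\gamma_c(G)$), but the two choices are interchangeable here since, as you observe, what powers the lift is only that the subgroup being killed is central.
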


   \begin{lem}\label{commute-same}
If $g, h: X \lra X$ are automorphisms such that $gh = hg$, then
$g(\Per_n(h)) \subseteq \Per_n(h)$ and $\Per_n^*(h) \subseteq \Per(g)$.  
\end{lem}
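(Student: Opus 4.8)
The plan is to treat the two assertions in turn. The first, $g(\Per_n(h)) \subseteq \Per_n(h)$, is a direct computation: since $gh = hg$ we have $gh^n = h^n g$ for every $n$, so if $x \in X(\Kbar)$ satisfies $h^n(x) = x$ then $h^n(g(x)) = g(h^n(x)) = g(x)$, whence $g(x) \in \Per_n(h)$. Applying the same reasoning to $g^{-1}$, which also commutes with $h$, yields the opposite inclusion, so in fact $g$ restricts to an automorphism of the closed subscheme $\Per_n(h)$ of $X$.

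For the second assertion I would base change to $\Kbar$ and combine the first part with a finiteness observation. The scheme $\Per_n(h)$ is Noetherian, hence has only finitely many irreducible components, and its $0$-dimensional components form a finite set $Z_n$ of closed points. The automorphism $g|_{\Per_n(h)}$ permutes the irreducible components of $\Per_n(h)$ and preserves their dimensions, so it permutes the finite set $Z_n$; since $Z_n$ is finite, every point of $Z_n$ is periodic under $g$. By definition every point of $\Per_n^*(h)$ lies in some $0$-dimensional component of $\Per_n(h)$, i.e. $\Per_n^*(h) \subseteq Z_n$, and therefore $\Per_n^*(h) \subseteq \Per(g)$, as desired.

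There is no real obstacle in this argument; the only point worth stating carefully is that an isomorphism of the variety $\Per_n(h)$ carries $0$-dimensional components to $0$-dimensional components and that there are only finitely many of these, both of which are immediate from Noetherianness and the invariance of dimension under isomorphism. I note that the same reasoning, applied for every $n$ and taking the union of the finite $g$-stable sets $Z_n$, in fact gives the slightly stronger statement $\Per^*(h) \subseteq \Per(g)$, which is the form in which the lemma is used in the proof of Theorem \ref{nil-same}.
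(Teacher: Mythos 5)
Your proof is correct and takes essentially the same approach as the paper's: the direct computation for the first inclusion, and for the second, the observation that $g$ stabilizes a finite set of isolated points and therefore acts on it by a permutation, forcing periodicity. The only cosmetic difference is that you work with the full set $Z_n$ of isolated points of $\Per_n(h)$ rather than with $\Per^*_n(h)$ itself, which lets you sidestep the (easy) check that $g$ preserves exact $h$-period; the paper instead notes that $g$, being injective, sends non-fixed points of $h^m$ to non-fixed points, so $g(\Per^*_n(h)) \subseteq \Per^*_n(h)$ directly.
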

\begin{proof}
  If $h^n(x) = x$, then $h^n(g(x)) = g (h^n(x)) = g(x)$; if
  $h^m(x) \not= x$, then $h^m(g(x)) = g (h^m(x)) \not= x$ since $g$ is
  one-to-one.  Hence, $g(\Per_n(h)) \subseteq \Per_n(h)$, and
  $g(\Per^*_n(h)) \subseteq \Per^*_n(h)$. Thus, every element of
  $\Per^*_n(h)$ has a finite forward orbit under $g$ and is therefore
  in $\Per(g)$.
\end{proof}

We are now ready to prove Theorem \ref{nil-same}.  We need one more
definition.

\begin{defn}
Let $G$ be a group of automorphisms of a variety $X$. If $Y$ is a
subset of a variety $X$, we define the stabilizer $G_Y$ of $Y$ as
\[ G_Y := \{ g \in G \; | \; g(Y) = Y \}. \]
We have a restriction map $\Res_Y: G_Y \lra \Aut(Y)$ given by
\[ \Res_Y(g) = g |_Y \text{ for $g \in G_Y$}. \]
We let $H_Y$ be the image of this map, that is
\[ H_Y := \Res_Y(G_Y). \]

\end{defn}

\begin{proof}[Proof of Theorem \ref{nil-same}]
 
  We will use Noetherian induction, which states that if a proposition
  is true for a point and is true for a variety $Y$ whenever it is
  true for all proper subvarieties of $Y$, then it is true for all
  varieties (see \cite[Ex. II.3.16]{Har}, for example).  

  If $X$ is a point, we are done since $G$ must be finite.  For the
  inductive step, we take a nontorsion $h \in Z(G)$, the center of
  $g$; some such $h$ exists by Lemma \ref{center} since $G$ must now
  be infinite as otherwise $g_1$ and $g_2$ would be torsion, and the
  theorem would be trivial.  By Lemma \ref{commute-same}, we have
  $\Per^{*}(g_1) \subseteq \Per(h)$.  Thus, we will be done if we can
  show that for any $n$ and any element
  $x \in \Per_n(h) \cap \Per^{*}(g_1)$, we must have
  $x \in \Per(g_2)$.  Fix an $n$.  Then $\Per_n(h)$ is a proper closed
  subset of $X$, which we denote as $Y$ and we have
  $g_1(Y) = g_2(Y) = Y$ by Lemma \ref{commute-same}.  Thus, each point
  corresponding to a component of dimension 0 in $\Per_n(h)$ is
  periodic for both $g_1$ and $g_2$.  Now, let $x \in \Per_n(h)$ lie
  in a component $Y$ of $\Per_n(h)$ of positive dimension.  Let
  $g_i' = \Res_Y(g_m)$.  The stabilizer group $H_Y$ is finitely
  generated and nilpotent since it is the image of a subgroup of a
  finitely generated nilpotent group.  Now, $Y$ is a proper closed
  subset of $X$, so we may
  apply the inductive hypothesis to conclude that
  $\Per^{*}(g_1') \subseteq \Per(g_2')$. Now, if
  $x \in Y \cap \Per^{*}(g_1)$, then $x \in \Per^*(g_1')$ so we see
  that every element of $x \in Y \cap \Per^{*}(g_1)$ is in
  $\Per(g_2)$, as desired.
\end{proof}

In the case of group of automorphisms that are virtually nilpotent
(have a nilpotent subgroup of finite index) we must strengthen our
hypothesis, as the following example shows.  

\begin{example}\label{nil-ex}
Let $G$ be the group of affine linear maps of the form $x \mapsto
\pm x +m $ for $m \in \bZ$.  The subgroup $H$ consisting of all maps
$x \mapsto x +m$ is abelian of index 2 in $G$, so $G$ is virtually
nilpotent.  If $f(x) = -x$, then 0 is in $\Per_1^*(f)$, but 0 is not
periodic under any elements of $H$ other than the identity.  Thus the
condition that $\alpha \in \Per^{**}(g)$ for some $g$ and not just
$\alpha \in \Per^*(g)$ is necessary in Theorem \ref{nil-same}.  
\end{example}

We are able to prove the following, however.

\begin{cor}\label{ns2}
  Let $G$ be a finitely generated group of automorphisms of a variety
  $X$.  If $G$ is virtually nilpotent, then for any
  $g_1, g_2 \in G$, we have $\Per^{*}(g_1) \subseteq \Per(g_2)$.  In
  particular, if $\Per_n(g_1)$  and $\Per_n(g_2)$ are finite for all
  $n$, then $\Per(g_1) = \Per(g_2)$.  
 \end{cor}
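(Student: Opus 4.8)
The plan is to reduce the statement to Theorem~\ref{nil-same} by passing to a finite-index nilpotent subgroup of $G$. Since $G$ is virtually nilpotent it contains a nilpotent subgroup of finite index, and after replacing that subgroup by its normal core we may fix a \emph{normal} nilpotent subgroup $N\trianglelefteq G$ with $[G:N]<\infty$; being a finite-index subgroup of the finitely generated group $G$, the group $N$ is again finitely generated. Put $M=[G:N]$; then $g^{M}\in N$ for every $g\in G$, since $(gN)^{M}$ is trivial in the order-$M$ group $G/N$. We also record the elementary fact that $\Per(g)=\Per(g^{M})$ for every $g\in G$: the inclusion $\Per(g^{M})\subseteq\Per(g)$ holds because $\Per_{k}(g^{M})=\Per_{kM}(g)$, and $\Per(g)\subseteq\Per(g^{M})$ holds because $g^{j}(x)=x$ forces $(g^{M})^{j}(x)=g^{jM}(x)=x$.

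Now fix $g_{1},g_{2}\in G$ and a point $x\in\Per^{*}(g_{1})$, and let $n$ be the minimal period of $x$ under $g_{1}$, so that $\{x\}$ is a $0$-dimensional irreducible component of $\Per_{n}(g_{1})$. Since $g_{1}^{nM}(x)=x$ we have $x\in\Per(g_{1}^{M})$. The heart of the argument would be to show that in fact $x\in\Per^{*}(g_{1}^{M})$. Granting that, Theorem~\ref{nil-same} applied to the finitely generated nilpotent group $N$ and to the elements $g_{1}^{M},g_{2}^{M}\in N$ gives $\Per^{*}(g_{1}^{M})\subseteq\Per(g_{2}^{M})$, whence $x\in\Per(g_{2}^{M})\subseteq\Per(g_{2})$; since $x$ was an arbitrary point of $\Per^{*}(g_{1})$, this is the desired inclusion $\Per^{*}(g_{1})\subseteq\Per(g_{2})$.

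The step $x\in\Per^{*}(g_{1}^{M})$ is the one I expect to require the most care. Writing $n'$ for the minimal period of $x$ under $g_{1}^{M}$, one has $\Per_{n'}(g_{1}^{M})=\Per_{Mn'}(g_{1})$, a periodic locus of $g_{1}$ for the period $Mn'$, which is divisible by $n$; so this closed set contains $\Per_{n}(g_{1})$, and although $\{x\}$ is a component of $\Per_{n}(g_{1})$ one must verify that $\{x\}$ is still a component of the larger set $\Per_{Mn'}(g_{1})$ --- that is, that $x$ is not absorbed into a positive-dimensional component at the higher period. This is exactly the subtlety recorded after the definition of $\Per^{**}$, and it disappears whenever $x$ lies on no positive-dimensional component of any $\Per_{k}(g_{1})$, in particular whenever every $\Per_{k}(g_{1})$ is finite, in which case one simply has $\Per^{*}(g_{1})=\Per(g_{1})$. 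That last observation disposes of the ``in particular'' clause directly: if every $\Per_{k}(g_{1})$ and every $\Per_{k}(g_{2})$ is finite, then so is every $\Per_{k}(g_{i}^{M})=\Per_{kM}(g_{i})$, so the second (``in particular'') assertion of Theorem~\ref{nil-same}, applied to the group $N$, gives $\Per(g_{1}^{M})=\Per(g_{2}^{M})$; combining this with the identities $\Per(g_{i})=\Per(g_{i}^{M})$ recorded above yields $\Per(g_{1})=\Per(g_{2})$.
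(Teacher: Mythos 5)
Your reduction to Theorem~\ref{nil-same} (pass to a finite-index nilpotent subgroup, replace $g_1,g_2$ by the powers $g_1^M,g_2^M$ lying in it, use $\Per(g)=\Per(g^M)$) is essentially the paper's argument, and your treatment of the ``in particular'' clause is correct. But the step you yourself flag as the heart of the matter --- that $x\in\Per^*(g_1)$ forces $x\in\Per^*(g_1^M)$ --- is a genuine gap, and it cannot be closed: a zero-dimensional component of $\Per_n(g_1)$ can indeed be absorbed into a positive-dimensional component of $\Per_{Mn'}(g_1)=\Per_{n'}(g_1^M)$. In fact the inclusion $\Per^*(g_1)\subseteq\Per(g_2)$ as literally stated fails for virtually nilpotent groups: in Example~\ref{nil-ex} (the infinite dihedral group of maps $x\mapsto\pm x+m$ acting on $\bA^1$), take $g_1(x)=-x$ and $g_2(x)=x+1$; then $0\in\Per_1^*(g_1)$, but $g_1^2=\mathrm{id}$, so $0$ sits inside the positive-dimensional set $\Per_2(g_1)$ and is not in $\Per^*(g_1^2)$, and $\Per(g_2)=\emptyset$. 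So the obstruction you identified is not a technicality to be worked around; it is a counterexample to the statement with $\Per^*$. The conclusion the paper's own proof establishes, and the only one used later (in the proof of Theorem~\ref{auto-proj}), is $\Per^{**}(g_1)\subseteq\Per(g_2)$.

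With $\Per^{**}$ in place of $\Per^*$ your argument closes and coincides with the paper's: choose $n$ with $g_1^n,g_2^n$ in the nilpotent finite-index subgroup (no normal core is needed), and use the two elementary facts that $\Per^{**}(g)=\Per^{**}(g^n)$ and $\Per^{**}(h)\subseteq\Per^*(h)$. The first holds because $\Per_k(g^n)=\Per_{kn}(g)$, so the periodic loci of $g^n$ are among those of $g$ (giving one inclusion), while $\Per_k(g)\subseteq\Per_{kn}(g)=\Per_k(g^n)$, so any positive-dimensional component of a periodic locus of $g$ through $x$ lies inside a positive-dimensional component of one for $g^n$ (giving the other); the second holds because a point of $\Per_m(h)$ lying on no positive-dimensional component of $\Per_m(h)$ is itself a zero-dimensional component. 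Then
\[ \Per^{**}(g_1)=\Per^{**}(g_1^n)\subseteq\Per^{*}(g_1^n)\subseteq\Per(g_2^n)=\Per(g_2), \]
where the middle inclusion is Theorem~\ref{nil-same}. Your disposal of the ``in particular'' clause stands unchanged, since finiteness of all $\Per_k(g_i)$ gives $\Per^{**}(g_i)=\Per^{*}(g_i)=\Per(g_i)$.
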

 \begin{proof}
   We observe that while $\Per^*(g^n)$ may
   be smaller than $\Per^*(g)$ (as in Example \ref{nil-ex}), we always
   have both $\Per^{**}(g^n) = \Per^{**}(g)$ and $\Per(g^n) = \Per(g)$.
   Now, let $A$ be a nilpotent subgroup finite index in $G$ and let
   $g_1, g_2 \in G$; then there is some $n$ such that
   $g_1^n, g_2^n \in A$.  Since
   $\Per^{*}(g_1^n) \subseteq \Per(g_2^n)$ by Theorem \ref{nil-same},
   we then have
 \[ \Per^{**}(g_1) = \Per^{**}(g_1^n) \subseteq \Per^{*}(g_1^n) \subseteq
 \Per(g_2^n) = \Per(g_2). \]

 \end{proof}

 Now we are ready to prove Theorem \ref{auto-proj}.  
 \begin{proof}[Proof of Theorem \ref{auto-proj}]
   By \cite[Theorem 1.1]{Tits-Var-Z} in characteristic 0 and
   \cite[Theorem 1.1]{Hu} in characteristic $p$ (see also
   \cite{Tits-Var}), the group $G$ is either virtually solvable or it
   contains a free group on two generators.  Thus, we may assume that
   $G$ is virtually solvable.  By \cite[Theorems 4.2 and
   4.7]{Rosenblatt}, a virtually solvable group that is not virtually
   nilpotent contains a free semigroup on two generators.  If
   $\Per^{**}(g_1)$ is not contained in $\Per(g_2)$ for some
   $g_1, g_2 \in G$, then $G$ is not virtually nilpotent by Corollary
   \ref{ns2}, so $G$ must contain a free semigroup on two generators.
 \end{proof}

 We will now begin our proof of Theorem \ref{solvable}. We begin with a lemma.

 \begin{lem}\label{commutator}
   Let $A$ be a group of automorphisms of a quasiprojective variety
   $X$.  Let $A'$ denote $[A,A]$, let $x \in X$, and let $g,h \in A$
   Suppose that $A'x$ is finite and that $x \in \Per(h)$.  Then $gx
   \in \Per(h)$.  
 \end{lem}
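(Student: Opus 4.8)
The plan is to exhibit a finite set that contains $gx$ and on which $h$ acts as a permutation. Since $x\in\Per(h)$, there is an $n\ge 1$ with $h^n x = x$, so the $h$-orbit $\{x,hx,\dots,h^{n-1}x\}$ is finite. Because $A'$ is normal in $A$, for every integer $i$ we have $h^i(A'x)=A'(h^i x)$, and this is finite since it is the image of the finite set $A'x$ under the bijection $h^i$. Hence $F:=\bigcup_{i=0}^{n-1}A'(h^i x)$ is finite. By construction $F$ is stable under $A'$ (multiplying a coset $A'(h^i x)$ on the left by an element of $A'$ gives it back), and it is stable under $h$, which sends $A'(h^i x)$ to $A'(h^{i+1}x)$ for $i<n-1$ and sends $A'(h^{n-1}x)$ to $A'(h^n x)=A'x$; thus $h$ cyclically permutes the $n$ pieces of $F$. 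Consequently $F$ is stable under the subgroup $N:=\langle A',h\rangle=A'\langle h\rangle$.

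Next I would note that $N$ is normal in $A$: under the quotient map $A\to A/A'$ it is the preimage of the cyclic subgroup generated by the image of $h$, and every subgroup of the abelian group $A/A'$ is normal. Therefore, for any $g\in A$, the set $g(F)$ is stable under $gNg^{-1}=N$; in particular $h\bigl(g(F)\bigr)=g(F)$, since $h g(F)=g\bigl(g^{-1}hg\bigr)(F)=g(F)$, using that $g^{-1}hg\in N$ fixes $F$ setwise.

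To finish: $g(F)$ is a finite set on which $h$ acts by a permutation, so every element of $g(F)$ has finite $h$-orbit and hence lies in $\Per(h)$. As $x=e\cdot x\in A'x\subseteq F$, we get $gx\in g(F)$, and therefore $gx\in\Per(h)$, as claimed.

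I do not expect a serious obstacle here: the single idea that makes the argument work is to replace the $A'$-orbit of $x$ by its $h$-saturation $F$ and to use that $\langle A',h\rangle$ is automatically normal in $A$ because $A/A'$ is abelian. The only care needed is the routine verification that $F$ is finite and $\langle A',h\rangle$-invariant, carried out in the first paragraph; everything else is formal group theory.
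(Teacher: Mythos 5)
Your proof is correct, but it takes a genuinely different route from the paper's. The paper's argument is a direct pigeonhole computation: pick $\ell$ with $h^\ell x = x$, observe that each $c_m := g^{-1}h^{\ell m} g h^{-\ell m}$ lies in $A'$, so the points $c_m(x)$ live in the finite set $A'x$; two of them must coincide, say $c_{m_1}(x) = c_{m_2}(x)$, and since $h^{-\ell m_i}(x)=x$ and one can cancel $g^{-1}$, this collapses to $h^{\ell m_1}(gx) = h^{\ell m_2}(gx)$, giving periodicity of $gx$ directly with an explicit bound $\ell(m_1-m_2)$ on the period. Your argument is more structural: you form the $h$-saturation $F=\bigcup_{i=0}^{n-1}A'(h^i x)$ of the orbit $A'x$, verify it is finite and invariant under $N=\langle A',h\rangle = A'\langle h\rangle$, note that $N$ is normal in $A$ because it contains $A'$ and $A/A'$ is abelian, and conclude that $g(F)$ is a finite $h$-invariant set containing $gx$. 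Both are valid; the paper's proof is shorter and hands you an explicit period, while yours isolates the conceptual reason the lemma works (normality of $\langle A',h\rangle$ forces the $h$-dynamics on $g(F)$ to mirror those on $F$) at the cost of a bit more bookkeeping. One small terminological slip: $A'(h^ix)$ is an $A'$-orbit, not a coset, though the verification you give is the correct one.
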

 \begin{proof}
 Choose $\ell$ such that $h^\ell(x) = x$.  Since $g^{-1} h^{\ell m} g
 h^{-\ell m}  \in A'$ for all $m$ and $A'x$ is finite, we see that
 there are some $m_1 \not= m_2$ such that
 \[ g^{-1} h^{\ell m_1} g h^{-\ell m_1} (x) =  g^{-1} h^{\ell m_2} g
   h^{-\ell m_2} (x).\]
 Since $x$ is fixed by $h^\ell$ and we can cancel the $g^{-1}$ from
 each side, this means that $h^{\ell m_1}(g(x)) = h^{\ell
   m_2}(g(x))$, so $h^{\ell (m_1 - m_2)}(g(x)) = g(x)$, so $g(x) \in
 \Per(h)$.  
\end{proof}

\begin{prop}\label{commutator2}
Let $A$ be a subgroup of a finitely generated group of automorphisms
of a quasi-projective variety $X$. Let $A' =[A,A]$ and let $h \in
A$. Suppose that $x$ is periodic under every element of $A'$ and that
$x \in \Per^*(h)$ for some $h \in A'$.  Then $x \in \Per(g)$ for
every $g \in A$.  
\end{prop}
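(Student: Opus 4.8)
The plan is to reduce, by passing to the orbit closure of $x$, to the situation where the relevant commutator subgroup is \emph{finite}, and then to finish with Lemma~\ref{commute-same}. Throughout I work over a finitely generated field $K$ of characteristic zero — the ambient finite generation is used only to spread $X$ and the automorphisms out over a $\bZ_p$-scheme. Two elementary observations will be used repeatedly: (i) for any $g\in A$ some power $g^k$ lies in any prescribed finite-index subgroup of $A$, and $\Per(g^k)\subseteq\Per(g)$, so it suffices to prove ``$x\in\Per(g)$'' for $g$ ranging over a subgroup of finite index in $A$; (ii) if $Z\subseteq X$ is an $A$-invariant closed subvariety, then restriction to $Z$ is a group homomorphism $A\to\Aut(Z)$ carrying $A'=[A,A]$ \emph{onto} the commutator subgroup of $A|_Z$, and it sends a point lying in a $0$-dimensional component of $\Per_n(h)$ to a point lying in a $0$-dimensional component of $\Per_n(h|_Z)$, with the same minimal period; so the hypotheses of the proposition pass from $(A,X,h)$ to $(A|_Z,Z,h|_Z)$.

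The heart of the argument would run as follows. First spread $X$ and a finite generating set of the ambient group out over $\bZ_p$, for a suitable prime $p$, exactly as in the proof of Theorem~\ref{arith-burn}: one obtains a $\bZ_p$-scheme $\cX$ with generic fibre $X$ on which the whole ambient group — hence $A$, hence $A'$ — acts by automorphisms, with $x\in\cX(\fo_v)$ for an appropriate $\fo_v$. Let $T\subseteq\cX(\fo_v)$ be the set of integral points periodic under every element of $A'$; by hypothesis $x\in T$. Applying Proposition~\ref{burn1} to the group $A'$ acting on $\cX$, the subgroup $M'$ of elements of $A'$ fixing $T$ pointwise has finite index in $A'$. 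A short computation, using that $A'$ is normal in $A$ and that $t$ (hence $gt$) is periodic under every element of $A'$, shows that $A$ permutes $T$; consequently $M'$ is normal in $A$ and $M'$ fixes every point of the orbit $Ax$ (it fixes $x\in T$, and $gx\in T$ for all $g\in A$). Now set $W:=\overline{Ax}$, an $A$-invariant closed subvariety; since $M'$ fixes the Zariski-dense subset $Ax$ of $W$, it acts trivially on $W$. The surjection $A'\twoheadrightarrow [A|_W,A|_W]$ therefore kills $M'$, and since $[A':M']<\infty$ we conclude that $[A|_W,A|_W]$ is a \emph{finite} group. Hence its centralizer $C_{A|_W}\big([A|_W,A|_W]\big)$ has finite index in $A|_W$, and so does its preimage $\widetilde C\le A$. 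Finally, for $g\in\widetilde C$ the automorphism $g|_W$ commutes with $h|_W\in[A|_W,A|_W]$, and by observation (ii) $x$ still lies in a $0$-dimensional component of $\Per_n(h|_W)$ with $n$ its minimal period there; so Lemma~\ref{commute-same} gives $x\in\Per(g|_W)=\Per(g)$. By (i) this holds for every $g\in A$, which is the assertion.

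The step I expect to be the real obstacle is precisely the implication ``$x$ periodic under every element of $A'$'' $\Longrightarrow$ ``$[A|_W,A|_W]$ is finite'': this is where one genuinely needs the $p$-adic arc lemma, packaged in Proposition~\ref{burn1} (and, behind it, Lemma~\ref{basic}). The normal subgroup $M'$ comes for free with finite index in $A'$ and automatically annihilates the orbit $Ax$, hence acts trivially on $W=\overline{Ax}$; what Proposition~\ref{burn1} buys us is that $M'$ is large enough inside $A'$ for this to force $[A|_W,A|_W]$ to be finite rather than merely virtually trivial. The rest is bookkeeping, but two points deserve care: verifying that the proposition's hypotheses really survive the restriction to $W$ (observation (ii), including the compatibility of ``isolated in $\Per_n$'' with intersecting by $W$), and the fact that $W$ is only $A$-invariant — not invariant under the whole ambient finitely generated group — which is why Proposition~\ref{burn1} is best applied to $A'$ acting on the model of $X$ itself rather than on any model of $W$.
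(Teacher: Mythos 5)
Your proof is correct, and it reaches the conclusion by a genuinely different endgame than the paper's. Both arguments spread out over $\bZ_p$, form the set $T$ of integral points periodic under all of $A'$, and invoke Proposition~\ref{burn1} to get a finite-index subgroup of $A'$ fixing $T$ pointwise; after that the routes diverge. The paper stays on $X$: it observes that $g$ permutes the (finitely many) cosets of that subgroup $N$ in $A'$, extracts an $m$ with $g^m h^n N = h^n N g^m$, deduces $g^m$ preserves $\Per_n(h)\cap\overline{T}$, and hence permutes its finitely many $0$-dimensional components. You instead pass to the $A$-invariant closure $W=\overline{Ax}$, note that $M'$ acts trivially on $W$ (it fixes the dense subset $Ax$), conclude $[A|_W,A|_W]$ is finite as a quotient of $A'/M'$, and then use the standard fact that the centralizer of a finite normal subgroup has finite index, which lets you finish with Lemma~\ref{commute-same} applied to $g|_W$ and $h|_W$. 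Your route is a bit more structural and arguably cleaner to follow, at the cost of having to verify that the hypothesis ``$x$ lies in a $0$-dimensional component of $\Per_n$'' survives restriction to $W$ (it does, since $\Per_n(h|_W)=\Per_n(h)\cap W$ and a component through $x$ of the intersection sits inside the $0$-dimensional component of $\Per_n(h)$ through $x$). One small slip: you write $\Per(g|_W)=\Per(g)$ where only the inclusion $\Per(g|_W)\subseteq\Per(g)$ holds, but that inclusion is all you need. Also note that your ``observation (ii)'' has a degenerate case — if $h|_W$ is the identity then $x\in\Per^*_1(h)$ forces $W$ to be $0$-dimensional, so $Ax$ is finite and the conclusion is immediate — worth a sentence to dispose of.
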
 
\begin{proof}
  There is a finite extension $K_v$ of some $\bQ_p$ with ring of
  integers $\fo_v$ and a model $\cX$ for $X$ over $\fo_v$ such that
  $x$ extends to an element of $\cX(\fo_v)$ and every element of
  $g \in A$ extends to an automorphism of $\cX$. Let $T$ be the set of
  elements in $\cX(\fo_v)$ that are periodic under every element of
  $A'$, and let $N$ be the subgroup of $A'$ that fixes every element
  of $T$.  Then $N$ is normal of finite index in $A'$ by Proposition
  \ref{burn1}, so $A'x$ is finite.  Now, let $g \in A$.  We have
  $g(T) \subseteq T$ by Lemma \ref{commutator}, so we have
  $g N g^{-1} (t) = t$ for any $t \in T$.  Thus, for any coset $bN$ of
  $N$, we have $g bN g^{-1}= gbg^{-1} g N g^{-1} = gbg^{-1} N$, so $g$
  permutes the cosets of $N$ in $A'$.  Thus, there is an $m$ such that
  $g^m b N g^{-m} = b N$ for every coset $bN$ of $N$ in $A'$.

  Let $x \in \Per_n(h) \cap T$.  We have $g^m h^n N = h^n N g^m$, so
  $g^m(x) = g^m h^n(x) = h^n(g^m(x))$, so $g^m(z) \in\ Per_n(h) \cap T$
  as well.  Let $Z_1 = \Per_n(h)$, which is Zariski closed, let $Z_2$
  be the Zariski closure of $T$, and let $Z = Z_1 \cap Z_2$. We have
  $g^m(Z) = Z$, so in particular $g^m$ sends zero-dimensional
  components of $Z$ to themselves. If $x \in \Per^*(h)$ then $x$ is in
  a zero-dimensional component of $Z$, so $x$ is thus periodic under
  $g^m$, which means that it is also periodic under $g$.
\end{proof}

\begin{proof}[Proof of Theorem \ref{solvable}]

  It is enough to treat the case where $G$ is solvable, since if a
  finite index subgroup $A$ of $G$ contains a finite index subgroup
  $H$ such that $h(x) =x$ for some $x$ for all $h \in H$, then $H$ has
  finite index in $G$ as well.  Thus, we will assume from now on that
  $G$ is solvable and not merely virtually solvable.

   We proceed by induction on the length of the derived series for
   $G$.  We will prove something slightly more precise than the
   statement of Theorem \ref{solvable}.  We will show that if $G$ is a
   subgroup of a finitely generated group and every element of
   $\Per^*(g)$ is nonempty for all $g$, then there is an $x \in X$
   such that $x \in \Per(g)$ for all $g \in G$ and $x \in \Per^*(h)$
   for some $n$ and $h$.  

   If $G$ is abelian, then for any $g, h \in G$, we have
   $g(\Per_n(h)) \subseteq \Per_n(h)$, so if $x \in \Per^{*}(h)$ for
   some $h$, then $x \in \Per(g)$ for all $g \in G$, since any
   $g \in G$ permute the zero-dimensional components of the Zariski closure of
   $\Per_n(h)$.  Note that while commutator groups may not be finitely
   generated, they are still subgroups of our original finitely
   generated group so Proposition \ref{commutator2} applies. Now, let
   $G' = [G,G]$.  By the inductive hypothesis, there is an
   $x \in X$ such that $x \in \Per(g)$ for all $g \in G'$ and
   $x \in \Per^*(h)$ for some $h \in G'$.
   By Proposition
   \ref{commutator2}, we have that $x \in \Per(g)$ for all $g \in G$
   as well, and our proof is complete.
 \end{proof}

 The following example shows that we need to have $\Per^*(g)$ and not
 just $\Per(g)$ nonempty for all $g \in G$ in order to conclude there
 is a point that is periodic under every element of a solvable group $G$.

 \begin{example}\label{solv-ex}
Consider the abelian group of $A = \Aut(\bA^2)$ of maps of the form

\[ f_{a,b} (x,y)= (x+ay+a+b \sqrt(2)y,y) \] where $a$ and $b$ are
integers.  Then for integers $a,b$ this map fixes the line consisting
of points of the form $(z,c)$ where $c$ is a solution to
$(a+b \sqrt{2})c + a=0$. Notice this $c$ exists and is unique unless
$a=b=0$ in which case we get the identity map. Notice when $a=1, b=0$
the only periodic points are $(x,-1)$ and when $a=0, b=1$ the only
periodic points are $(x,0)$ so there are no common periodic points.
Thus, if $G$ is any finitely generated subgroup of $A$ containing
$f_{1,0}$ and $f_{0,1}$, then there is no $x \in \bA^2$ that is in
$\Per(g)$ for all $g \in G$. This shows that the condition that
$\Per^*(g)$ and not just $\Per(g)$ be nonempty for all $g$ is
necessary in Theorem \ref{solvable}.
\end{example}

% \begin{lem}
% Let $G' = [G,G]$ where $G$ is finitely generated.  Let $M$ be the
% intersections of the stabilizers of all $x \in \cX(\fo_v)$ that are
% periodic under every element of $G'$.  Then $g M g^{-1} = M$.  
% \end{lem} 
% \begin{proof}
% Follows from commutator proof above.  
% \end{proof}

% \begin{cor}
% Let $G' = [G,G]$ where $G$ is finitely generated.  Let $M$ be the
% intersections of the stabilizers of all $x \in \cX(\fo_v)$ that are
% periodic under every element of $G'$.   For any $g \in G$, there is an $N$ such that $g^n (aM) g^{-n} = aM$
% for any $a \in G'$.
% \end{cor}

% Maybe need an additional intersection type thing here.

% \begin{lem}
%   Let $G' = [G,G]$ where $G$ is finitely generated.  Let $g \in G$,
%   let $h \in M$.  There is an $N$ such that $g^N(\Per_n \cap (\cap_{g
%     \in g'} \Per(g))) = (\Per_n \cap (\cap_{g
%     \in g'} \Per(g)))$.  
% \end{lem}
% \begin{proof}
%   Let $M$ be the intersections of the stabilizers of all
%   $x \in \cX(\fo_v)$ that are periodic under every element of
%   $G'$. Then $M$ is normal in $G$ from above and thus also normal in
%   $G'$.  There is an $N$ such that $g^N h^n H  g^{-N} = h^n H$, so 
%   $g^N h^n H = h^n H g^N$.  Thus, for any $x \in (\Per_n \cap (\cap_{g
%     \in g'} \Per(g))) $, we have  so $g^N h^n(x) = h^n g^N(x)$, so if
%   $h^n(x) = x$, then $h^n(g^N(x)) = g^N(x)$, as desired.   
% \end{proof}

\section{More general endomorphisms}\label{north-sec}

Let $f: X \lra X$ be any finite endomorphism.  We let $\Prep_{m,n}(f)$
be set of points $f^m(x) = f^{m+n}(x)$.  Again, this is a closed
subset of $X$.  We let $\Prep^*_{m,n}(f)$ be the points that are the
zero dimensional components of $\Prep_{m,n}(f)$ and we let
$\Prep^*(f)$ be the union of all the $\Prep^*_{m,n}(f)$.

We also let $\Prep^{**}(f)$ denote the elements of $\Prep(f)$ that are
not in a positive-dimensional component of $\Prep_{m,n}(f)$ for any
$f$, in analogy with the definition of $\Per^{**}(g)$ for an
automorphism $g$.

We pose the following question.

\begin{quest}
Let $S$ be a finitely generated semigroup of finite morphisms from a
quasiprojective variety to itself.  Is it true that at least one of
the following must hold:
\begin{itemize}
\item[(i)] we have $\Per^{**}(f_1) \subseteq \Per(f_2)$ for any $f_1,
  g_2 \in G$ or
\item[(ii)] $G$ contains a subsemigroup isomorphic to the free semigroup on
  two generators?
\end{itemize}
\end{quest}

While we cannot obtain results along the lines of Theorems
\ref{auto-proj}, \ref{nil-same}, \ref{solvable}, and \ref{arith-burn}
for general finite morphisms, we are able to obtain two Northcott-type
results.  The first can be thought of as a Northcott theorem for
integral points.

\begin{thm}\label{north}
  Let $f: X \lra X$ be a finite morphism, where $X$ is a projective variety
  defined over a finitely generated field $K$ of characteristic 0.
  Let $D$ be any positive integer.  Then there are at most finitely
  many $\alpha \in \Prep^*(f)$ such that $[K(\alpha): K] \leq D$.
\end{thm}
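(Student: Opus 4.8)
The plan is to show that every $\alpha \in \Prep^*(f)$ with $[K(\alpha):K] \le D$ is a zero-dimensional component of one of \emph{finitely many} fixed closed subschemes of $X$; the real work is a $p$-adic bound, in the style of \cite{BGT1,BGT2}, on the length of the forward orbit of such an $\alpha$. The first step is purely formal. For $\alpha \in \Prep(f)$ let $t = t(\alpha)$ be the least integer with $f^t(\alpha)$ periodic and let $\ell = \ell(\alpha)$ be the period of $f^t(\alpha)$. If $\alpha$ is a zero-dimensional component of $\Prep_{m,n}(f)$, then $f^m(\alpha) = f^{m+n}(\alpha)$ is periodic, which forces $m \ge t$ and $\ell \mid n$; hence $\Prep_{t,\ell}(f) \subseteq \Prep_{m,n}(f)$ (because $f^a x = f^{a+b} x$ implies $f^{a'} x = f^{a'+b'} x$ whenever $a \le a'$ and $b \mid b'$), and since the property of being a zero-dimensional component passes to closed subschemes, $\alpha$ is a zero-dimensional component of $\Prep_{t,\ell}(f)$ as well. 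Thus it suffices to produce a bound $B$, depending only on $X$, $f$, $K$ and $D$, with $t(\alpha), \ell(\alpha) \le B$ for all such $\alpha$: then every such $\alpha$ lies in $\bigcup_{m,n \le B} \Prep^*_{m,n}(f)$, which is a finite set because each $\Prep^*_{m,n}(f)$ is the set of zero-dimensional irreducible components of the Noetherian scheme $\Prep_{m,n}(f)$. (The degree hypothesis is genuinely needed at this point: for $f(x) = x^2$ on $\bP^1$, the set $\Prep^*(f)$ contains every root of unity.)

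To bound $t(\alpha)$ and $\ell(\alpha)$, spread out exactly as in the proofs of Theorem \ref{arith-burn} and Proposition \ref{burn1}. Choose a finitely generated $\bZ$-subalgebra $R$ of $K$ with $\operatorname{Frac}(R) = K$ over which $X$ and $f$ admit a projective model with $f$ extending to a finite morphism, and then, using \cite[Proposition 4.3]{BGT1} and \cite[Lemma 3.1]{Bell}, embed $R$ into $\bZ_p$ for a suitable large prime $p$, obtaining a projective $\bZ_p$-scheme $\cX$ whose generic fiber is $X \times_K \bQ_p$, with $f$ still extending to a finite morphism and with good reduction. Since $[K(\alpha):K] \le D$ and $K$ now sits inside $\bQ_p$, the field $K(\alpha)$ embeds into a finite extension $K_v$ of $\bQ_p$ with $[K_v:\bQ_p] \le D$; by properness $\alpha$ extends to a point of $\cX(\fo_v)$ and reduces to a point $\bar\alpha$ of the special fiber $\cX_v$ over the residue field $k_v$, where $q := \# k_v \le p^D$.

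Now $f$ reduces to a finite self-map $f_v$ of $\cX_v$. Because $\#\cX_v(k_v)$ is bounded purely in terms of the (fixed) projective embedding and of $q \le p^D$, the forward $f_v$-orbit of $\bar\alpha$ has length at most a constant $C = C(\cX, p, D)$, so there is a fixed $N$ — a multiple of $\operatorname{lcm}(1,\dots,C)$, say — with $f_v^N$ fixing $\overline{f^C(\alpha)}$. At this stage I would invoke the $p$-adic interpolation of iterates (Poonen, as used in \cite{BGT1,BGT2} and in Lemma \ref{basic}): after replacing $N$ by a bounded multiple $N'$, the bound depending only on $p$ and the ramification index $e \le D$ of $K_v$, the map $k \mapsto f^{N'k}(f^C(\alpha))$ extends to a $p$-adic analytic arc $\bZ_p \to \cX(\fo_v)$. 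Since $\alpha$ is preperiodic this arc takes only finitely many values on the nonnegative integers, hence is constant, so $f^{N'}(f^C(\alpha)) = f^C(\alpha)$; therefore $t(\alpha) \le C$ and $\ell(\alpha) \mid N'$, and we may take $B = \max(C, N')$, which completes the argument. The main obstacle is this last step when $X$ is singular: the arc lemma as formulated in Lemma \ref{basic} assumes a smooth model, and since a finite morphism need not preserve the smooth locus, the stratification device of Proposition \ref{burn1} does not apply verbatim. To finish the proof one must either reduce to the case of smooth $X$, or use a form of the interpolation lemma valid at an arbitrary point of the $p$-adic analytic space $\cX(\fo_v)$; verifying that the bounds on $t(\alpha)$ and $\ell(\alpha)$ survive at the (possibly singular) points $\overline{f^C(\alpha)}$ one lands in is, I expect, the crux.
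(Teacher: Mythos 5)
Your formal reduction---every $\alpha\in\Prep^*(f)$ is a zero-dimensional component of $\Prep_{t,\ell}(f)$ where $t$ is the tail length and $\ell$ the exact period, so it suffices to bound $t(\alpha)$ and $\ell(\alpha)$---is correct, and the spreading out to a $\bZ_p$-model and the choice of $K_v$ with $[K_v:\bQ_p]\le D$ match the paper's setup. The problem comes at the step you yourself flag as the crux: $p$-adic interpolation of iterates does not apply here. Poonen's result, in the form used in \cite{BGT1,BGT2} and in Lemma \ref{basic}, requires the iterated map (after passing to a suitable power) to act on a residue disc as a power series whose linear part is congruent to the identity modulo the maximal ideal and whose higher-order terms lie in the maximal ideal; in particular the map must be invertible on the disc, which forces \'etaleness along the orbit in question. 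A finite morphism of degree greater than one need not satisfy this even on a smooth model: for $f(x)=x^2$ on $\bP^1$ no power of $f$ is invertible near $0$ or $\infty$, and in general $f$ may be ramified at $f^{t(\alpha)}(\alpha)$ or along its cycle, so no analytic arc $k\mapsto f^{N'k}(f^C(\alpha))$ exists. This is a deeper obstruction than the smooth-locus issue you raise, and stratifying $X$ will not repair it.

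The paper avoids the arc lemma altogether. Lemma \ref{dim} pushes $\alpha\in\Prep^*_{m,n}(f)$ forward to a genuine periodic point $f^m(\alpha)\in\Prep^*_{0,n}(f)$. Fakhruddin's Proposition \ref{period} then bounds the periods of $\fo_v$-points for an arbitrary self-morphism of the model, and the Scanlon-style Proposition \ref{preimage} shows a periodic $\fo_v$-point has only finitely many $\fo_v$-points in its full backward orbit, again for an arbitrary finite map; both are elementary residue-disc counting arguments that use no \'etaleness. Substituting these two propositions for your interpolation step gives exactly the paper's proof.
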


\begin{thm}\label{contain}
  Let $f, g: X \lra X$ be finite morphisms of a quasi-projective
  variety over a field of characteristic 0.  If
  $g(\Prep^*(f)) \subseteq \Prep^*(f)$, then
  $\Prep^*(f) \subseteq \Prep(g)$.

\end{thm}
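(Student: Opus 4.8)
The plan is to mimic the structure of the proof of Proposition \ref{commutator2} and the Noetherian induction used for Theorem \ref{nil-same}, but now working with the semigroup $\langle f, g\rangle$ rather than a nilpotent group. The key observation is that $g(\Prep^*(f)) \subseteq \Prep^*(f)$ forces $g$ to permute the finitely many zero-dimensional components of each closed set $\Prep_{m,n}(f)$ in a controlled way, so that every point of $\Prep^*(f)$ has finite forward $g$-orbit. First I would reduce to the case where the base field is finitely generated of characteristic $0$ (replace $K$ by a finitely generated subfield over which $X$, $f$, $g$ are all defined) and pass to a $\bZ_p$-model as in the proof of Theorem \ref{arith-burn}: there is a finite extension $K_v$ of some $\bQ_p$ with ring of integers $\fo_v$ and a model $\cX$ over $\fo_v$ such that $f$, $g$ extend to finite endomorphisms of $\cX$. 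The point of this reduction is to make the $p$-adic arc lemma available, exactly as in Lemma \ref{basic}; note that finite morphisms are étale away from a proper closed subset, so the reduction/arc argument applies on the smooth étale locus and we handle the bad locus by Noetherian induction on $\dim X$.

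The core of the argument runs as follows. Fix $\alpha \in \Prep^*(f)$, so $\alpha$ lies in a zero-dimensional component of some $\Prep_{m,n}(f)$. By hypothesis $g(\Prep^*(f)) \subseteq \Prep^*(f)$, and since $g$ is a morphism it sends $\Prep_{m,n}(f)$ into $\Prep_{m,n'}(f)$ for a suitable shift (using $f \circ g$ versus $g \circ f$ — here I would record the precise combinatorial relationship between the indices, which is the one genuinely fiddly computation). Because each $\Prep_{m,n}(f)$ has only finitely many zero-dimensional components and $g$ maps zero-dimensional components to zero-dimensional components of sets in the same finite family, the forward orbit $\{\alpha, g\alpha, g^2\alpha, \dots\}$ meets only finitely many zero-dimensional components, and a pigeonhole argument — or the finite-index stabilizer statement of Proposition \ref{burn1} applied to the subvariety consisting of these finitely many points, together with the $p$-adic arc lemma to upgrade "$g^i\alpha = g^j\alpha$ for some $i>j$" — shows $\alpha$ is periodic under $g$, i.e. $\alpha \in \Prep(g)$. (In fact since $\alpha$ is genuinely periodic under $g$ on this finite set, $\alpha \in \Per(g) \subseteq \Prep(g)$.) For the points of $\Prep^*(f)$ lying in the bad locus $\cY$ (the non-smooth or non-étale locus of $f$, a proper closed subvariety stable under restriction), I would pass to $\Res_\cY f$ and $\Res_\cY g$ — noting $g(\Prep^*(f)) \cap \cY \subseteq \Prep^*(f|_\cY)$ once one checks that a zero-dimensional component of $\Prep_{m,n}(f)$ lying in $\cY$ is still a zero-dimensional component of $\Prep_{m,n}(f|_\cY)$ — and invoke the Noetherian inductive hypothesis.

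The main obstacle I expect is bookkeeping with the preperiodic-index shifts: unlike the automorphism case, where $g$ commutes with $h$ and $\Per_n(h)$ is literally $g$-invariant, here $g$ and $f$ do not commute, so I must verify carefully that $g$ maps zero-dimensional components of $\Prep_{m,n}(f)$ to zero-dimensional components of $\Prep_{m',n'}(f)$ for indices $(m',n')$ ranging over a set that stays finite along the orbit. Concretely, from $f^m(x) = f^{m+n}(x)$ one does not directly get a relation on $g(x)$; instead one uses that $\Prep^*(f)$ itself is $g$-stable by hypothesis, so $g(x)$ lands in some $\Prep^*_{m',n'}(f)$ — the content is that the collection of pairs $(m',n')$ that actually occur, as we iterate $g$ starting from a fixed $\alpha$, is finite, which follows because $\Prep^*(f) = \bigcup \Prep^*_{m,n}(f)$ and each point of $\Prep^*(f)$ has a well-defined minimal eventual period and preperiod bounded in terms of the (finitely many) components it can lie in. Once that finiteness is in hand, the $p$-adic arc lemma (or simply the finiteness of the set of candidate images together with injectivity failing) closes the argument exactly as in Proposition \ref{commutator2}. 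A secondary, more minor, point is confirming that Proposition \ref{burn1}, stated for groups of automorphisms, can be applied to the \emph{finite set} of candidate orbit points — but there $g$ acts by a permutation of a finite set, so this is immediate and no genuine endomorphism issue arises.
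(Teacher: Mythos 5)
The central step — showing that the forward $g$-orbit of a fixed $\alpha \in \Prep^*(f)$ is finite — is never actually established in your proposal, and the tools you reach for to establish it do not apply. You assert that "the collection of pairs $(m',n')$ that actually occur, as we iterate $g$ starting from a fixed $\alpha$, is finite, which follows because $\Prep^*(f) = \bigcup \Prep^*_{m,n}(f)$ and each point of $\Prep^*(f)$ has a well-defined minimal eventual period and preperiod bounded in terms of the (finitely many) components it can lie in." This is circular: the finiteness of the relevant components is exactly what needs to be proved, and over a general field there is no a priori bound on periods and preperiods. Moreover, you propose to close the gap via the $p$-adic arc lemma (Lemma \ref{basic}) and Proposition \ref{burn1}, but both of those are stated and proved for groups of \emph{automorphisms}; $g$ here is a general finite morphism, so the arc lemma (which needs an invertible linearization mod $p$) and the stabilizer argument of Proposition \ref{burn1} do not apply, and the proposed workaround via Noetherian induction on the non-\'etale locus is also absent from the paper and not clearly workable for non-invertible $g$.

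The paper's proof has no arc lemma and no Noetherian induction. After embedding into $\bZ_p$ as you do, it shows directly that the set of $K$-points of $\Prep^*(f)$ that extend to $\cX(\bZ_p)$ is finite, using three ingredients you did not invoke: (1) Lemma \ref{dim}, which sends any $\gamma \in \Prep^*_{m,n}(f)$ via $f^m$ to a point of $\Prep^*_{0,n}(f)$; (2) Fakhruddin's Proposition \ref{period}, which bounds the periods of $\cX(\fo_v)$-points, so only finitely many $n$ occur and each $\Prep^*_{0,n}(f)$ is a finite set; and (3) Scanlon's Proposition \ref{preimage}, which shows each periodic $\fo_v$-point has only finitely many $\fo_v$-point preimages under all iterates of $f$. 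Since $g$ preserves this finite set (it maps $X(K)\cap\Prep^*(f)$ into itself and, being defined over $\fo_v$, maps $\fo_v$-points to $\fo_v$-points), the $g$-orbit of $\alpha$ is finite, hence $\alpha\in\Prep(g)$. A secondary error in your write-up: from a finite forward orbit under a non-injective map one only gets $\alpha\in\Prep(g)$, not $\alpha\in\Per(g)$ as you parenthetically claim.
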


We will prove Theorems \ref{north} and \ref{contain} after introducing
a few propositions and lemmas. 

Fakhruddin \cite[Proposition 1]{Fak2} (see also
\cite[Theorem 1.2]{Keping2}) proved the following.

\begin{prop} \label{period}
  Let $X$ be a variety defined over a finite extension $K_v$ of
  $\bQ_p$ and let $\cX$ be a model for $X$ over the ring of integers
  $\fo_v$ of $K_v$.  Let $f: \cX \lra \cX$ be a morphism.  Then there
  is a bound on the periods of elements of $\cX(\fo_v)$.
\end{prop}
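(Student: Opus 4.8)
The plan is to reduce to the case of affine space by working locally and then apply the $p$-adic arc/interpolation machinery that has already appeared in the paper, in the same spirit as Lemma~\ref{basic} and the references to \cite{Poonen}. Concretely, I would first observe that since $f$ is a morphism of schemes over $\fo_v$, it preserves the smooth locus of $\cX$, and by a Noetherian-induction argument (stratifying $\cX$ into a smooth piece and a lower-dimensional closed piece, exactly as in the proof of Proposition~\ref{burn1}) it suffices to bound the periods of periodic points lying in a fixed residue disk $\cU_\gamma = r^{-1}(\gamma)$ of a smooth $\fo_v$-scheme, where $r:\cX(\fo_v)\lra\cX(k_v)$ is the reduction map. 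There are only finitely many residue classes $\gamma\in\cX(k_v)$, so a uniform bound over each disk gives a uniform bound overall.

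On such a disk, I would invoke the analytic uniformization: as in the proof of Lemma~\ref{basic} (following \cite[Proposition~2.2]{BGT1}), after an analytic change of coordinates $\iota:\cU_\gamma \xrightarrow{\sim} \fo_v^n$, the map $f$ restricted to an $f$-invariant residue disk is given by convergent power series $F_1,\dots,F_n\in\fo_v[[z_1,\dots,z_n]]$ each congruent to a linear form mod $\fm_v$, and moreover (point (iii) of that proof) satisfying $F_i(z)=\tfrac1\pi H_i(\pi z)$ with $H_i\in\fo_v[[z]]$; this last fact is exactly what forces the dynamics to be sufficiently contracting. If $x\in\cU_\gamma$ has period $N$ under $f$, then $x$ is a fixed point of $f^N$; writing $\cF=\iota f \iota^{-1}$ and using that the linear part of $\cF$ is congruent to an invertible matrix mod $\fm_v$ whose entries lie in $k_v$, one shows that $\cF^e$ (for $e$ the ramification index, say, or some explicit power governed by $|\GL_n(\fo_v/\fm_v)|$) has linear part congruent to the identity, hence is, up to passing to a bounded power, given by power series to which \cite{Poonen} applies: the orbit map $m\mapsto \cF^{e m}(x)$ extends to a $p$-adic analytic function $\bZ_p\lra\fo_v^n$. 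A fixed point of a nontrivial power then corresponds to a zero of a nonzero analytic function on $\bZ_p$, forcing the period to divide $e\cdot(\text{bounded constant})$; more precisely, $N$ must divide an integer depending only on $\#\GL_n(k_v)$ and $e$.

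The only subtlety — and the step I expect to be the main obstacle — is the separation into ``finitely many residue disks, each bounded'' when $\cX$ is not assumed smooth, since the smooth-locus stratification is only over $\fo_v$ and one must check that an $\fo_v$-point which is $f$-periodic and reduces into the singular fiber is handled by the induction; this is precisely the bookkeeping carried out in Proposition~\ref{burn1}, so I would simply adapt that argument verbatim. A secondary point is ensuring the power series for a suitable iterate of $f$ genuinely satisfy the hypotheses of the arc lemma even though $f$ itself need not be an automorphism — here one uses that $f$ is finite, hence quasi-finite, so on each residue disk it is still étale onto its image after suitable restriction, which is enough to run the argument. Since the cited result \cite[Proposition~1]{Fak2} is exactly this statement, I would in the writeup either reproduce this sketch or, more economically, simply cite Fakhruddin and Huang (\cite{Fak2, Keping2}) and move on, as the paper in fact does.
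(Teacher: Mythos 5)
The paper does not prove Proposition~\ref{period}; it simply cites Fakhruddin \cite[Proposition~1]{Fak2} and Huang \cite[Theorem~1.2]{Keping2}, as you observe at the end of your proposal. Your offered sketch, however, has a genuine gap. The $p$-adic arc/interpolation argument via \cite{Poonen} that drives Lemma~\ref{basic} and Proposition~\ref{burn1} requires the linear part of the local power-series expansion of the map to be invertible modulo $\fm_v$; this holds automatically for automorphisms (which are \'etale) but fails for general morphisms. Your attempted patch---that ``$f$ is finite, hence quasi-finite, so on each residue disk it is still \'etale onto its image after suitable restriction''---is not valid: finite and quasi-finite do not imply \'etale. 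The map $x \mapsto x^2$ on $\bA^1$ is finite but has vanishing derivative at the fixed point $0$; the map $x \mapsto x^p$ over $\fo_v$ has derivative $p\alpha^{p-1} \equiv 0 \pmod{\fm_v}$ at every Teichm\"uller-lift fixed point $\alpha$ in $\fo_v^{\times}$, so the linear part is congruent to zero mod $\fm_v$ and there is no invertibility to exploit, hence no Poonen-type analytic parametrization $\bZ_p \to \fo_v^n$ of the orbit. Moreover, the proposition as stated only assumes $f$ is a morphism, not a finite morphism, so your argument is quietly adding a hypothesis. Fakhruddin's actual proof is of a different flavor, controlling periods through the finite quotient sets $\cX(\fo_v/\fm_v^s)$ and the structure of the induced self-maps there, rather than through analytic interpolation, and it makes no \'etaleness or invertibility assumption. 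For the purposes of this paper the economical route you mention at the end---citing \cite{Fak2} and \cite{Keping2} without proof---is exactly what the authors do, and is the correct choice; the analytic sketch should be discarded.
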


The below comes from an argument of Scanlon (see \cite[Section
3]{Scanlon}).

\begin{prop}\label{preimage}
Let $X$ be a variety defined over a finite extension $K_v$ of
$\bQ_p$ and let $\cX$ be a model for $X$ over the ring of integers
$\fo_v$ of $K_v$.  Let $f: \cX \lra \cX$ be a finite map.  Let $\alpha
\in \cX(\fo_v)$ be periodic for $f$.  Then there are finitely many
$\beta \in \cX(\fo_v)$ such that there is an $n$ for which $f^n(\beta)
= \alpha.$
 \end{prop}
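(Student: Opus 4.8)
The plan is to reduce, by elementary reduction‑mod‑$\fm_v$ bookkeeping, to a purely local statement about a single residue disk, and then to establish that statement by a $p$‑adic Newton‑polygon argument that exploits the discreteness of the value group of $K_v$.

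First I would dispose of the case that $\alpha$ is merely periodic: if $\alpha$ has exact period $m$, then $\{\beta\in\cX(\fo_v):f^n(\beta)=\alpha\text{ for some }n\}$ is exactly the backward orbit, under the finite map $g:=f^m$, of the finite cycle $\{\alpha,f(\alpha),\dots,f^{m-1}(\alpha)\}$, every point of which is fixed by $g$; so it is harmless to assume $f(\alpha)=\alpha$. Next I would pass to the residue disk of $\bar\alpha:=r(\alpha)$, where $r:\cX(\fo_v)\to\cX(k_v)$ is reduction. If $\beta\in\cX(\fo_v)$ and $f^n(\beta)=\alpha$, then $\bar f^n(r(\beta))=\bar\alpha$, so $r(\beta)$ lies in the finite backward orbit of $\bar\alpha$ under $\bar f$ on the finite set $\cX(k_v)$; as that set is partitioned into finitely many residue classes $\cU_\gamma=r^{-1}(\gamma)$, and since for each relevant $\gamma$ the map $f^{k}$ with $k$ minimal such that $\bar f^k(\gamma)=\bar\alpha$ carries the $\beta$'s lying over $\gamma$ into $\cU_{\bar\alpha}$ with finite fibers ($f$ being finite), it suffices to prove that the backward orbit of $\alpha$ meeting the single disk $\cU_{\bar\alpha}$ is finite. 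Here $f(\cU_{\bar\alpha})\subseteq\cU_{\bar\alpha}$ because $\bar f(\bar\alpha)=\bar\alpha$, so a forward orbit from a point of $\cU_{\bar\alpha}$ that lands on $\alpha$ stays in $\cU_{\bar\alpha}$ throughout.

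For this local statement I would use the $p$‑adic analytic description of residue disks from \cite{BGT1,BGT2}: when $\cX$ is smooth at $\bar\alpha$ one has $\hat\cO_{\cX,\bar\alpha}\cong\fo_v[[z_1,\dots,z_d]]$ and an analytic identification $\cU_{\bar\alpha}\cong\fm_v^d$ taking $\alpha$ to $0$ and $f$ to a convergent power‑series map $F$ with coefficients in $\fo_v$ and $F(0)=0$. If the linear part $L=dF_0$ has unit determinant, then $\bar f$ is étale at $\bar\alpha$, $F$ is an analytic automorphism of $\fm_v^d$ by the $p$‑adic inverse function theorem, and the backward orbit of $\alpha$ in the disk is just $\{\alpha\}$. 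Otherwise $f$ is ramified at $\bar\alpha$, and the mechanism is that preimages are pushed outward: one estimates the valuations along a forward chain $\beta=\beta_0,\ \beta_1=f(\beta_0),\dots,\beta_n=\alpha$ inside the disk and finds them strictly increasing, while $f^{-1}(\alpha)\cap\cU_{\bar\alpha}$ is a finite set and therefore bounds $v(\beta_{n-1})$ from above; this forces $n$ to be bounded, so the backward orbit meeting $\cU_{\bar\alpha}$ is contained in $\bigcup_{n\le N}f^{-n}(\alpha)$, which is finite. In dimension one this is transparent: writing $F(z)=a_1z+a_2z^2+\cdots$ with $v(a_1)\ge1$, every monomial $a_iz^i$ has valuation $>v(z)$ for $z\in\fm_v\setminus\{0\}$, so $v(f(\beta_i))>v(\beta_i)$ and the valuations increase strictly along the chain.

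The step I expect to be the real obstacle is making this valuation estimate work in arbitrary dimension, and when $\cX$ is not smooth at $\bar\alpha$. When $L$ is neither invertible nor topologically nilpotent—only some eigen‑directions of $\bar L$ are contracting—the naive coordinatewise valuation need not increase, and one must either build a weighted valuation adapted to the Fitting decomposition $\fo_v^d=V_0\oplus V_1$ of $L$ (with $\bar L|_{V_0}$ nilpotent and $\bar L|_{V_1}$ invertible), or else pass to the Zariski closure $Y$ of the backward orbit, observe that $f(Y)=Y$ and that $f|_Y$ is again finite, and induct on $\dim X$, reducing ultimately to the curve (or $0$‑dimensional) case above. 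The non‑smooth case I would attack by working directly with the finite injective local endomorphism $\hat f^{*}\colon\hat\cO_{\cX,\bar\alpha}\to\hat\cO_{\cX,\bar\alpha}$ induced by $f$, applying Weierstrass preparation to the ideal cutting out the section $\alpha$; the principle that valuations increase toward $\alpha$ under ramification should still govern the argument, but carrying it out carefully is the heart of the matter.
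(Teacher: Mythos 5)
Your proposal is a genuinely different route from the paper's, and as you yourself acknowledge, it is not yet a complete proof.

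Your reduction to the case $f(\alpha)=\alpha$ and the subsequent reduction to backward orbits meeting the single residue disk $\cU_{\bar\alpha}$ are both correct (though the latter is more elaborate than what the paper needs). The problem is the core analytic step. Your valuation/Newton-polygon mechanism is sound when $\dim X = 1$, $\cX$ is smooth at $\bar\alpha$, and the linear part $L$ is either a unit or topologically nilpotent, but you explicitly flag that it breaks down when $\bar L$ has both an invertible and a nilpotent block, and again when $\cX$ fails to be smooth at $\bar\alpha$. The suggested patches (a weighted valuation adapted to the Fitting decomposition; Noetherian induction via the Zariski closure $Y$ of the backward orbit; Weierstrass preparation for the non-smooth case) are all plausible directions but none is carried out, and at least one has a visible hole as stated: the induction on $\dim Y$ gives nothing when $Y=X$, which cannot be excluded \emph{a priori}. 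So the proposal, as written, establishes the proposition only in a special case.

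The paper's proof (an argument of Scanlon's) avoids the local $p$-adic analysis entirely and is much shorter. After reducing to $f(\alpha)=\alpha$, it observes that any backward orbit of $\alpha$ in $\cX(\fo_v)$ other than $\{\alpha\}$ itself must pass through some $\gamma\in\cX(\fo_v)$, $\gamma\ne\alpha$, with $f(\gamma)=\alpha$; there are finitely many such $\gamma$. Fix one. Pick $s$ large enough that $r_s(\gamma)\ne r_s(\alpha)$ in the finite set $\cX(\fo_v/\fm^s)$. Since $f_s(r_s(\gamma))=r_s(\alpha)$ is a fixed point distinct from $r_s(\gamma)$, the image $r_s(\gamma)$ is strictly preperiodic for $f_s$; hence no point of the finite set $\cX(\fo_v/\fm^s)$ can reach $r_s(\gamma)$ in more than $M:=|\cX(\fo_v/\fm^s)|$ steps. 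Lifting, $f^n(\beta)=\gamma$ forces $n\le M$, and the backward orbit of $\gamma$ is contained in $\bigcup_{n\le M}f^{-n}(\gamma)$, which is finite because $f$ is a finite map. This requires no smoothness, no case analysis on $L$, and works in every dimension. The observation you are missing is that one does not need valuations to increase along orbits; it is enough that a preimage $\gamma\ne\alpha$ of the fixed point becomes \emph{strictly preperiodic} upon reduction modulo a sufficiently high power of $\fm_v$, which bounds the length of any backward chain terminating at $\gamma$.
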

 \begin{proof}
   Let $f^m(\alpha) = \alpha$. Without loss of generality, we replace
   $f$ with $f^m$ and suppose that $f(\alpha) = \alpha$.  Pick a
   $\gamma \not= \alpha$ in $\cX(\fo_v)$ such that
   $f(\gamma) = \alpha$ (if no such $\gamma$ exists, we are done).
   Since there are at most finitely many such $\gamma$, it suffices to
   show that there are finitely many $\beta \in \cX(\fo_v)$ such that
   $f^n(\beta) = \gamma$ for some $n$.  Let $r_s$ be the reduction map
   from $\cX(\fo_v)$ to $\cX(\fo_v/\fm^s)$ where $\fm$ is the maximal
   ideal of $\fo_v$.  Then there is an $s$ such that
   $r_s(\gamma) \not= r_s(\alpha)$.  Let $f_s$ denote the map $f$
   induces on $\cX(\fo_v/\fm^s)$.  Then $r_s(\gamma)$ is not periodic
   under $f_s$.  Let $M$ be the number of points in $\cX(\fo_v/\fm^s)$
   (which is finite since $\fo_v / \fm^s$ is finite).  Then there is
   no $\beta$ such that $f^n(\beta) = \gamma$ for $n > M$.  This
   completes our proof.
 \end{proof}

 \begin{lem}\label{dim}
   Let $f: X \lra X$ be finite and surjective.  If
   $\alpha \in \Prep^*_{m,n}(f)$, then
   $f^m(\alpha) \in \Prep^*_{0,n}(f)$.
 \end{lem}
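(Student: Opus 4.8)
The plan is to reduce everything to the identity $\Prep_{m,n}(f) = (f^m)^{-1}(\Prep_{0,n}(f))$, which holds because $f^m(x) = f^{m+n}(x)$ if and only if $f^n(f^m(x)) = f^m(x)$. In particular $\beta := f^m(\alpha)$ automatically lies in $\Prep_{0,n}(f)$, and $f^m$ restricts to a morphism $g \colon \Prep_{m,n}(f) \lra \Prep_{0,n}(f)$ which is finite (a restriction of the finite map $f^m$ to closed subschemes of source and target) and surjective: given $y \in \Prep_{0,n}(f)$, surjectivity of $f$ gives some $x$ with $f^m(x) = y$, and then $f^{m+n}(x) = f^n(y) = y = f^m(x)$, so $x \in \Prep_{m,n}(f)$. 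It therefore remains to show that $g$ carries the zero-dimensional component $\{\alpha\}$ of $\Prep_{m,n}(f)$ to a zero-dimensional component of $\Prep_{0,n}(f)$.

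Suppose not, so that $\beta$ lies in an irreducible component $Z$ of $\Prep_{0,n}(f)$ with $\dim Z \ge 1$. Then $(f^m)^{-1}(Z)$ is a closed subset of $\Prep_{m,n}(f)$ containing $\alpha$, and $g$ restricts to a finite surjective morphism $(f^m)^{-1}(Z) \lra Z$, so $\dim (f^m)^{-1}(Z) = \dim Z \ge 1$. I want to conclude that $\alpha$ itself lies in a component of $(f^m)^{-1}(Z)$ of positive dimension, contradicting $\alpha \in \Prep^*_{m,n}(f)$. For this I use that $f$, being a finite surjective self-map, permutes the irreducible components of $X$: if some component $X_i$ had $f(X_i)$ strictly contained in a distinct component $X_j$, then — since by surjectivity and finiteness every component of $X$ is the image under $f$ of some component — iterating backwards along $f$ through the finitely many components would produce an irreducible closed set strictly containing a component, which is impossible. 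Hence $f^m$ permutes the components of $X$ as well; writing $X_j$ for the component of $X$ containing $Z$ and $X_i$ for the unique component with $f^m(X_i) = X_j$, the restriction $f^m|_{X_i} \colon X_i \lra X_j$ is a finite surjective morphism of irreducible varieties and $(f^m)^{-1}(Z) \cap X_i = (f^m|_{X_i})^{-1}(Z)$. Passing to normalizations and applying going-down — equivalently, the inequality $\operatorname{ht}\!\big(I_Z \cdot \mathcal{O}_{X_i,\gamma}\big) \le \operatorname{ht}(I_Z)$ for the injective integral extension $\mathcal{O}_{X_j, f^m(\gamma)} \hookrightarrow \mathcal{O}_{X_i,\gamma}$ of catenary local domains — one gets $\dim_\gamma (f^m|_{X_i})^{-1}(Z) \ge \dim Z \ge 1$ for every $\gamma \in X_i$ with $f^m(\gamma) \in Z$. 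Applying this with $\gamma = \alpha$ yields the contradiction, so $\dim Z = 0$ and $f^m(\alpha) \in \Prep^*_{0,n}(f)$.

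The step I expect to be the main obstacle is the dimension estimate in the second paragraph. A general finite surjective morphism $Y \lra Z$ can have isolated points in the preimage of a positive-dimensional subvariety when $Y$ fails to be normal, so the conclusion genuinely needs the self-map hypothesis (to force the permutation of irreducible components, hence that $\alpha$ lands in the image of its own component) together with a normalization argument to reduce to a situation where going-down applies. Making this go through cleanly for reducible, possibly non-normal $X$ — in particular verifying that it is $\alpha$ itself, and not merely some other point of $(f^m)^{-1}(\beta)$, that lies on the dominating component $X_i$ — is where the care is required.
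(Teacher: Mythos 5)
Your proposal follows the same essential route as the paper's proof: both reduce the lemma to the assertion that for a finite surjective self-map of $X$, every irreducible component of $(f^m)^{-1}(Z)$ has dimension $\dim Z$, so that no zero-dimensional component can appear when $\dim Z \ge 1$. The paper states this almost without argument, appealing to Going-Up and the equality $\dim A = \dim B$ on affine charts; your normalization-plus-Going-Down variant is a more careful attempt at the same step. But the worry you flag at the end is not a side concern --- it is exactly where both arguments break. For your argument to close, you need $\alpha$ to lie on the component $X_i$ with $f^m(X_i) = X_j \supseteq Z$, and nothing forces this when $\beta = f^m(\alpha)$ lies on an intersection of components of $X$.

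In fact the statement is false without an extra hypothesis. Glue $X_1 = \mathbb{A}^2$ and $X_2 = \mathbb{A}^1$ at a point $p$ (so $X$ is reduced, reducible, and not equidimensional), and let $f$ act by $(u,v) \mapsto (u(u-1), v^2)$ on $X_1$ and by the identity on $X_2$; one checks this is a finite surjective endomorphism of $X$. Then $\Prep_{0,1}(f) = \Per_1(f)$ has $X_2$ as a one-dimensional component, and since $(f|_{X_1})^{-1}(X_1 \cap X_2) = \{(0,0),(1,0)\}$, the point $(1,0)\in X_1$ is an isolated point of $\Prep_{1,1}(f) = f^{-1}(\Prep_{0,1}(f))$, hence lies in $\Prep^*_{1,1}(f)$. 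Yet $f(1,0)=p$ sits only in the positive-dimensional component $X_2$, so $f(\alpha) \notin \Prep^*_{0,1}(f)$. Thus neither Going-Up nor Going-Down on normalizations can save the argument as written, because $\alpha$ simply is not on a component dominating $Z$. You correctly pinpointed the gap; the repair is an equidimensionality (or irreducibility) assumption on $X$, under which your Going-Down reduction does give the needed dimension bound --- but as stated, both your proof and the paper's leave this open.
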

 \begin{proof}
   For any component $Y$ of $X$, its image $f(Y)$ has the same
   dimension as $Y$ because $f$ is finite, by the Going-Up Theorem
   (see \cite[Theorem 5.16]{AM}, for example).  Recall that finite
   maps are closed and affine (see \cite[Chapter II, Ex. 3.4 and
   3.5]{Har}). Since $X$ has finitely many components, it follows that
   for any component $Y$ of $X$, its image $f(Y)$ is a component of
   $X$.  Then for any open affine $U = \Spec A$ in $X$, there is a ring
   $B$ such that $f^{-1}(U)$ is isomorphic to $\Spec B$
   and the map $f^*$ from $A$ to $B$ is an inclusion with
   $\dim A = \dim B$ (again, by the Going-Up Theorem).  Hence, for any
   irreducible closed subvariety of $V$ of $X$, and any component $W$
   of $f^{-1}(V)$, we have $\dim V = \dim W$.  In particular, every
   0-dimensional component of $\Prep^*_{m,n}(f)$ is the inverse image
   of a 0-dimensional component of $\Prep^*_{0,n}(f)$.
   \end{proof}

   Now, we are ready to prove Theorems \ref{north} and \ref{contain}.

  \begin{proof}[Proof of Theorem \ref{north}]
    We embed $K$ into $\bZ_p$ as in Theorem \ref{arith-burn}.  Note
    that $\bQ_p$ has finitely many extensions of degree $D$ or less,
    so we let $K_v$ be their compositum.  By Lemma \ref{dim}, every
    point in $\Prep^*_{m,n}(f)$ is in $f^{-m}(\alpha)$ for some
    $\alpha \in \Prep^*_{0,n}(f)$.  By Proposition \ref{period}, the
    set 
    $\cup_{n=1}^\infty \Prep^*_{0,n}(f)$ is finite.  And by
    Proposition \ref{preimage}, for any $\alpha \in \Prep^*_{0,n}(f)$,
    the set of $\beta$ such that there is an $m$ for which
    $f^m(\beta) = \alpha$ is finite.  Thus, there are at most finitely
    many $\alpha \in \Prep^*(f)$ such that $[K(\alpha) : K] \leq D$,
    as desired.
  \end{proof}

  \begin{proof}[Proof of Theorem \ref{contain}]
    Let $\alpha \in \Prep^*(f)$.  Again embed into $\bZ_p$ in a way
    that $\alpha$ extends to a point $\ta$ in $\bZ_p$ and so that $f$
    and $g$ extend to maps on $\cX(\bZ_p)$. To prove that
    $\alpha \in \Prep(g)$, it will suffice to show that there are only
    finitely many points in $X(K) \cap \Prep^*(f)$ that extend to
    points in $\cX(\bZ_p)$, since $\alpha \in X(K) \cap \Prep^*(f)$
    and $g$ maps $X(K) \cap \Prep^*(f)$ to itself. For any
    $\beta \in X(K)$ that extends to a point in $\cX(\bZ_p)$ we let
    $\tb$ denote this extension (note that not all points in $X(K)$
    necessarily extend to points in $\cX(\bZ_p)$ since $X$ may not be
    projective).  By Lemma \ref{dim}, for any
    $\gamma \in \Prep_{m,n}^*(f)$ we have $f^m(\gamma) = \beta$ for
    some $\beta \in \Prep_{0,n}^*(f)$.  By Proposition \ref{period},
    the set of periods of points in $\cX(\bZ_p)$ is bounded.  For any
    periodic $\tb \in \cX(\bZ_p)$, there are only finitely many
    $\tg \in \cX(\bZ_p)$ such that $f^m(\tg) = \tb$ for some $m$, by
    Proposition \ref{preimage}.  Thus, we see that there are at most
    finitely many pairs $(m,n)$ such that there are points in
    $X(K) \cap \Prep_{m,n}^*(f)$ that extend to points in
    $\cX(\bZ_p)$.  Since each $\Prep_{m,n}^*(f)$ is finite, this
    finishes our proof.
  \end{proof}

  \newcommand{\etalchar}[1]{$^{#1}$}
\providecommand{\bysame}{\leavevmode\hbox to3em{\hrulefill}\thinspace}
\providecommand{\MR}{\relax\ifhmode\unskip\space\fi MR }
% \MRhref is called by the amsart/book/proc definition of \MR.
\providecommand{\MRhref}[2]{%
  \href{http://www.ams.org/mathscinet-getitem?mr=#1}{#2}
}
\providecommand{\href}[2]{#2}

\end{document}